\def\max{\operatorname{max}}
\def\min{\operatorname{min}}
\def\Mod{\operatorname{mod}}
\def\lcm{\operatorname{lcm}}
\def\mult{\operatorname{mult}}
\def\add{\operatorname{add}}
\def\F{\mathbb{F}}
\def\R{\mathbb{R}}
\def\N{\mathbb{N}}
\def\Z{\mathbb{Z}}
\def\Q{\mathbb{Q}}
\def\AA{\mathcal{A}}
\def\JJ{\mathcal{J}}
\def\OO{\mathcal{O}}
\def\TT{\mathcal{T}}
\newcommand{\inv}{^{-1}}
\newcommand{\Jj}{\mathcal{J}}
\newcommand{\NN}{\mathbb{N}}
\newcommand{\QQ}{\mathcal{Q}}
\newcommand{\nxn}{\N \rtimes \N^\times}
\newcommand{\zxz}{\Z \rtimes \Z^\times}
\newcommand{\qxq}{{\mathbb Q \rtimes \mathbb Q^*_+}}
\newcommand{\ZS}{Zappa-Sz\'{e}p }
\renewcommand{\mid}{:}
\def\BS{\operatorname{BS}}
\def\zsp{Zappa-Sz\'{e}p product }
\def\zsps{Zappa-Sz\'{e}p products}
\def\imod#1{\allowbreak\mkern10mu({\operator@font mod}\,\,#1)}
\newtheorem{thm}{Theorem}[section]
\newtheorem{cor}[thm]{Corollary}
\newtheorem{lemma}[thm]{Lemma}
\newtheorem{prop}[thm]{Proposition}
\theoremstyle{definition}
\newtheorem{definition}[thm]{Definition}
\theoremstyle{remark}
\newtheorem{remark}[thm]{Remark}
\newtheorem{example}[thm]{Example}
\begin{document}

\title[Zappa-Sz\'{e}p products of semigroups and their $C^*$-algebras]{Zappa-Sz\'{e}p products of semigroups and their $C^*$-algebras}

\author{Nathan Brownlowe}
\address{Nathan Brownlowe, Jacqui Ramagge, David Robertson and Michael F. Whittaker \\ School of Mathematics and
Applied Statistics  \\
The University of Wollongong\\
NSW  2522\\
AUSTRALIA} \email{nathanb@uow.edu.au,ramagge@uow.edu.au,droberts@uow.edu.au,mfwhittaker@gmail.com}
\author[Jacqui Ramagge]{Jacqui Ramagge}
\author[David Robertson]{David Robertson}
\author[Michael F. Whittaker]{Michael F. Whittaker}

\thanks{This research was supported by the Australian Research Council}

\keywords{$C^*$-algebra, semigroup, Zappa--Sz\'{e}p product, self-similar group, Baumslag-Solitar group, quasi-lattice ordered group}
\subjclass[2010]{Primary: {46L05}; Secondary: {20M30}}

\begin{abstract}
{\zsps} of semigroups provide a rich class of examples of semigroups that include the self-similar group actions of Nekrashevych. We use Li's construction of semigroup $C^*$-algebras to associate a $C^*$-algebra to {\zsps} and give an explicit presentation of the algebra.  We then define a quotient $C^*$-algebra that generalises the Cuntz-Pimsner algebras for self-similar actions. We indicate how known examples, previously viewed as distinct classes, fit into our unifying framework. We  specifically discuss the Baumslag-Solitar groups, the binary adding machine, the semigroup $\nxn$, and the $ax+b$-semigroup $\zxz$.
\end{abstract}

\maketitle

\section{Introduction}\label{sec: introduction}

Examples are crucial to progress in $C^*$-algebras. Operator-algebraists are therefore enthusiastic to have ways of generating and analysing rich classes of examples. Semigroups feature in a number of families of interesting examples. In this article we describe a new class of semigroup $C^*$-algebras.

The theory of $C^*$-algebras associated to semigroups can be traced back to Coburn's Theorem~\cite{Cob}, which says that any two $C^*$-algebras generated by a non-unitary isometry are isomorphic. There have been a number of generalisations of Coburn's Theorem, including Douglas's work~\cite{d} on positive cones of ordered subgroups of~$\R$, and Murphy's work~\cite{m} on positive cones in ordered abelian groups. A major generalisation was developed by Nica~\cite{n} through his introduction of quasi-lattice ordered groups $(G,P)$.

A quasi-lattice ordered group $(G,P)$ consists of a  partially-ordered group $G$ and a positive cone $P$ in $G$. Nica identified a class of covariant isometric representations of $P$, and introduced the $C^*$-algebra $C^*(G,P)$ universal for such representations. Quasi-lattice ordered groups are rigid enough in their structure to produce a tractable class of $C^*$-algebras $C^*(G,P)$, and yet they include a wide range of interesting semigroups as examples. Indeed, quasi-lattice ordered groups are still providing a rich source of interesting $C^*$-algebras, as evidenced by the recent work on the $C^*$-algebras associated to $\nxn$ \cite{lr}, and the Baumslag-Solitar groups~\cite{Sp}.

A broad generalisation of Nica's $C^*$-algebras associated to quasi-lattice ordered groups has recently been introduced by Li \cite{Li2012}. He associates a number of $C^*$-algebras to discrete left cancellative semigroups. This generality is possible because of the importance of the right ideal structure of the semigroup; the full $C^*$-algebra $C^*(P)$ is generated by an isometric representation of $P$ and a family of projections associated to right ideals in $P$ satisfying a set of relations. As well as quasi-lattice ordered groups, Li's construction caters for the $ax+b$-semigroups over the rings of algebraic integers in number fields (see also \cite{cdl}). We will examine the $ax+b$-semigroup over $\Z$, which is the ring of algebraic integers in $\Q$.

A seemingly unrelated class of $C^*$-algebras has recently been discovered by Nekrashevych \cite{nek_jot,N2009}, namely those associated with self-similar group actions. 
The first example of a self-similar action was given by Grigorchuk \cite{Gri80}. As an infinite finitely-generated torsion group with intermediate growth, Grigorchuk's example solved a number of open problems, see \cite[p.14]{nek_book}. Since then a large number of interesting group actions have been shown to be self-similar and we refer the reader to Nekrashevych's book~\cite{nek_book} for further details.

A self-similar action $(G,X)$ consists of 
a group $G$ with a faithful action on the set $X^*$ of finite words on a finite set $X$; 
the action is self-similar in the sense that for each $g\in G$ and $x\in X$ 
there exists a unique $g|_x\in G$ such that $ g\cdot (xw) = (g\cdot x) (g|_x \cdot w)$ for all $w\in X^*$. 
Nekrashevych associated a Cuntz-Pimsner $C^*$-algebra to a self-similar action $(G,X)$ via generators and relations.
The algebra is generated by a unitary representation of $G$ and a collection of isometries associated to $X$, with  commutation relations modelled on the self-similarity relations. The Cuntz-Pimsner algebra contains copies of the full group $C^*$-algebra and the Cuntz algebra $\OO_{|X|}$. 
Since then, a universal Toeplitz-Cuntz-Pimsner algebra has been constructed that contains a generalised version of  Nekrashevych's algebras as a quotient~\cite{lrrw}. The self-similar commutation relations provide for an extremely simple generating set in both cases, and make the algebras particularly tractable. These commutation relations have been the inspiration for the results in this paper.

We identify a class of $C^*$-algebras that includes both those associated to quasi-lattice ordered groups and those associated to self-similar actions. We do this using a construction that was developed by G. Zappa in~\cite{Z} and J. Sz\'{e}p in~\cite{Szep1,Szep2,Szep3}. Given two groups, one can potentially impose a number of group-theoretic structures on their Cartesian product. In a direct product, both groups embed in the product as normal subgroups. In a semidirect product only one of the groups need be normal in the product. In a \ZS product of two groups neither group  need be normal in the product. \ZS products of semigroups were first described by Kunze in \cite{Kun}, and more recently Brin \cite{Brin} has described \ZS products in much broader generality. We examine a class of \ZS products of semigroups, and we associate two $C^*$-algebras to these semigroups.

We start with the \ZS product of two left-cancellative semigroups with identities. Following Li's construction from~\cite{Li2012}, we produce a full $C^*$-algebra. We give a new presentation of this full $C^*$-algebra via generators and relations. We then introduce a boundary quotient $C^*$-algebra, also with a presentation in terms of generators and relations. In some cases our results  reduce to known results (see Remark \ref{qlog_ex} for the quasi-lattice ordered group case). In other cases our results provide new, and more tractable, presentations of known algebras. Our construction also applies to new examples not covered by previous frameworks. As well as the $C^*$-algebras associated to quasi-lattice ordered groups and self-similar actions, we describe an example of a $C^*$-algebra associated to a self-similar action of a semigroup (see Sections \ref{subsec: the adding machine} and \ref{subsec: the adding machine C star C*}) and to products of self-similar actions.

The paper is organised as follows.
Section~\ref{sec: background} contains background material on the classes of semigroups we consider, and on Li's construction of the full $C^*$-algebra associated to discrete left-cancellative semigroups. In Section~\ref{sec: bowtie products} we recall the general \ZS product of semigroups, and we examine the examples of interest to us. In Section~\ref{sec: the generalised Toeplitz algebra} we give our alternative presentation of Li's full $C^*$-algebra via generators and relations. In Section~\ref{sec: the boundary quotient} we introduce the boundary quotient. We finish in Section~\ref{sec: examples} with an examination of the $C^*$-algebras associated to the examples of \ZS products introduced in Sections~\ref{subsec: baumslag-solitar groups}--\ref{subsec:products_SSAs}.

We thank Marcelo Laca for several interesting and helpful conversations about this work.

\section{Background}\label{sec: background}

In this section we present background material on semigroups and their $C^*$-algebras. 
Note that we consider only discrete left-cancellative semigroups satisfying an additional property on right common multiples as described in Definition~\ref{def: LCM}. We also outline Li's construction for associating a $C^*$-algebra to a left-cancellative semigroup from \cite{Li2012}.

\subsection{Semigroups}\label{subset: background - semigroups} 
All semigroups considered in this paper will have an identity, and hence are monoids. For a semigroup $P$, we write $P^*$ for the set of invertible elements.
Recall that $P$ is left cancellative if $pq=pr\Longrightarrow q=r$ for all $p,q,r\in P$.
We work with semigroups in which elements have right least common multiples in the following sense.

\begin{definition}\label{def: LCM}
Suppose $P$ is a discrete left-cancellative semigroup. We say $r\in P$ is a {\em right multiple} of $p\in P$ if there exists $q \in P$ such that $pq=r$. 
An element $r\in P$ is a {\em right least common multiple} (or {\em right LCM}) of $p$ and $q$ in $P$ if $r$ is a right common multiple of $p,q\in P$, and any other right common multiple of $p$ and $q$ is also a right multiple of $r$.
 We say $P$ is a {\em right LCM semigroup} if 
 %it is a semigroup, and if 
 any two elements with a right common multiple have a right least common multiple. 
\end{definition}

Even if they exist, least common multiples need not be unique. The following result will be obvious to experts but we couldn't find a reference, so we include it for completeness.

\begin{lemma}\label{lem:LCMs}
Suppose $P$ is a discrete left-cancellative semigroup and that $r\in P$ is a right LCM for $p,q\in P$. 
Then $s\in P$ is a right LCM for $p$ and $q$ if and only if $s=ru$ for some $u\in P^*$. 
\end{lemma}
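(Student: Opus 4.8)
The plan is to prove the two implications separately, and the main point to be careful about is that we are only assuming left cancellativity, not right cancellativity, so we cannot freely cancel on the right.

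First I would prove the easy direction: suppose $s = ru$ for some $u \in P^*$. Since $r$ is a right common multiple of $p$ and $q$, write $r = pa = qb$ for some $a,b \in P$. Then $s = ru = p(au) = q(bu)$, so $s$ is a right common multiple of $p$ and $q$. Now let $t$ be any right common multiple of $p$ and $q$. Since $r$ is a right LCM, $t = rv$ for some $v \in P$. Then $t = rv = (su^{-1})v = s(u^{-1}v)$, so $t$ is a right multiple of $s$. Hence $s$ is a right LCM for $p$ and $q$.

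For the converse, suppose $s$ is also a right LCM for $p,q$. Applying the LCM property of $r$ to the common multiple $s$, we get $s = ru$ for some $u \in P$; applying the LCM property of $s$ to the common multiple $r$, we get $r = sv$ for some $v \in P$. Combining, $r = sv = ruv$ and $s = ru = svu$. Now I need to cancel $r$ (resp. $s$) on the \emph{left} of these equations. Write $r = p a$ for some $a \in P$ (since $r$ is a right common multiple of $p,q$, such an $a$ exists). From $r = ruv$ we get $p a = p(a\,uv)$, so left cancellativity by $p$ gives $a = a\,uv$. Hmm — this only cancels the $p$, leaving $a = a(uv)$, and I would want to conclude $uv$ is an identity-like element. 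This is the step I expect to be the main obstacle: from $a = a(uv)$ and $b = b(uv)$ (doing the same with $r = qb$) I need to deduce $uv$ acts as a right identity, or at least that $u$ is invertible.

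The cleaner route, which I would actually use, is to observe $r = r(uv)$ directly (from $r = sv = ruv$) and $s = s(vu)$. Then for \emph{any} element of the form $r = r(uv)$, I claim $uv$ must be a right identity wherever it is multiplied — but in a general monoid that need not hold globally. So instead I would argue locally: from $r = r(uv)$, right-multiply nothing; instead use left cancellativity in the ambient structure. Actually the correct argument: $r\cdot uv = r = r \cdot 1$, and since $P$ is left cancellative we \emph{cannot} cancel $r$ on the left of itself. The fix is to pass to the product differently: write $ru = s$ and $sv = r$, so $ruv = r$ and $svu = s$; then $ru(vu) = (ruv)u = ru$, i.e. $s(vu) = s$, and similarly $r(uv) = r$. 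Now $u(vu) = (uv)u$ and from $s = s(vu)$, left-cancelling is still not available. The genuinely working trick is: from $ruv = r = r\cdot 1$ and the fact that in $P^1$ (or using that $r$ itself has the form $pa$), repeatedly substitute to get $u(vu)^n$ behaving well, OR simply note $(uv)(uv) = u(vu)v = $ and chase. I would ultimately cancel $p$ from $pa = pa(uv)^k$ to get $a = a(uv)^k$, set $k$ so that... — the honest summary is that one shows $uv$ and $vu$ are both right identities for the relevant elements, forcing $u,v$ to be mutually inverse, hence $u \in P^*$; the crux is extracting invertibility of $u$ from the one-sided relations $r = ruv$, $s = svu$ using only left cancellation, which I would handle by cancelling the left factor $p$ in $r = pa$ and showing $a = auv$, $a = a(uv)^2$, and then that $auv = a$ forces $uv$ to be a left identity on the right-hand factors, completing the argument that $u(vu)$ and $(vu)u$ collapse so $v = u^{-1}$.
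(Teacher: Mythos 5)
Your easy direction (if $s=ru$ with $u\in P^*$ then $s$ is a right LCM) is correct and is exactly the paper's argument. The other direction, however, is not completed, and the reason is a misreading of what left cancellativity allows. You correctly obtain $s=ru$ and $r=sv$ for some $u,v\in P$, hence $r=r(uv)$ and $s=s(vu)$, but then you assert that ``we cannot cancel $r$ on the left of itself.'' This is precisely what left cancellativity \emph{does} permit: it says $px=py\Rightarrow x=y$ for all $p,x,y\in P$, with no restriction on $p$, so taking $p=r$ in $r(uv)=r=re$ (recall that every semigroup in this paper is assumed to have an identity $e$ --- indeed $P^*$ is only meaningful for monoids) gives $uv=e$ at once, and likewise $s(vu)=s=se$ gives $vu=e$. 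Thus $u\in P^*$ with inverse $v$, and $s=ru$ as required; this two-line cancellation is exactly the paper's proof. Note there is no inconsistency in the rule you were applying: cancelling $p$ in $pa=p(a\,uv)$ and cancelling $r$ in $r(uv)=re$ are the same operation.

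Your detour through $r=pa$ and $a=a(uv)$ genuinely stalls: to conclude anything from $a=a(uv)$ you would need to cancel $a$ on the \emph{right}, which is unavailable, and the subsequent sketch about $uv$ being a ``right identity on the relevant elements'' never closes the argument. So as written the proposal has a real gap in the forward implication; the fix is simply to apply left cancellation of $r$ and of $s$ directly against the identity element, rather than first peeling off the factor $p$.
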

\begin{proof}
Suppose $s \in P$ is a right LCM for both $p$ and $q$ in $P$. 
Since both $r,s$ are right least common multiples for $p,q\in P$, 
there exist $r', s'\in P$ such that $s=rr'$ and $r=ss'$. 
Suppose $e\in P$ is the identity in $P$. 
Then $se=s=rr'=(ss')r'=s(s'r')$. Since $P$ is left cancellative, we conclude $s'r'=e$. 
Similarly, $r's'=e$. Hence $r'\in P^*$  and $s=ru$ for some $u\in P^*$ as required.

Now suppose that $s=ru$ for some $u\in P^*$. Since $r$ is a right LCM for $p,q$, there exist $p',q'\in P$ such that $pp'=r=qq'$. Thus $pp'u =s= qq' u$, and hence $s$ is a right common multiple for $p,q$. To see that $s$ is a right LCM for $p,q$, suppose $t\in P$ is a right common multiple of $p,q$. So there exist $p'',q''\in P$ such that $pp''=t=qq''$. Since $r$ is a right LCM, there exist $r''$ such that $t=rr''$. Then $t=ruu^{-1}r''=st'$ for $t'=u^{-1}r''\in P$, and hence $t$ is a right multiple of $s$. Since $t$ was an arbitrary right common multiple of $p,q$, we conclude that $s$ is a right LCM for $p$ and $q$ in $P$.
\end{proof}

%A left-cancellative semigroup $P$ is said to satisfy \emph{Clifford's condition} if, for any $p,q \in P$, the intersection $pP \cap qP$ of principal right ideals is either empty, or of the form $rP$ for some $r\in P$. If $P$ is a left cancellative right LCM semigroup, then $P$ satisfies Clifford's condition.

\begin{example}\label{QLO}
An example of a right LCM semigroup comes from the quasi-lattice ordered groups introduced by Nica in \cite{n}. Let $G$ be a discrete group and $P$ a subsemigroup of $G$ with $P^*=\{e\}$. Then $P$ induces a partial order on $G$ via $x\le y\Longleftrightarrow x^{-1}y\in P$. The pair $(G,P)$ is a {\em quasi-lattice ordered group} if any $x,y\in G$ which have a common upper bound in $P$ have a least upper bound $x\vee y\in P$. That is, if $(G,P)$ is quasi-lattice ordered, then $P$ is a right LCM semigroup with unique right LCMs.
\end{example}

\subsection{Semigroup $C^*$-algebras}\label{subset: semigroup C*-algebras} Let $P$ be a discrete left cancellative semigroup. We recall Li's construction of the $C^*$-algebra $C^*(P)$ from \cite{Li2012}. Given $X \subseteq P$ and $p \in P$ define
\[
 pX = \{px : x \in X\}\quad \text{and}\quad p\inv X = \{y\in P: py \in X\}.
\]
A set $X\subseteq P$ is called a right ideal if it is closed under right multiplication with any element of $P$. If $X$ is a right ideal, then so are $pX$ and $p\inv X$.

\begin{definition}[{\cite[p.4]{Li2012}}]
Let $\Jj(P)$ be the smallest family of right ideals of $P$ satisfying
\begin{enumerate}
 \item[(1)] $P, \varnothing \in \Jj(P)$;
 \item[(2)] $X \in \Jj(P)$ and $p \in P$ implies $pX$ and $p\inv X \in \Jj(P)$; and
 \item[(3)] $X, Y \in \Jj(P)$ implies $X \cap Y \in \Jj(P)$.
\end{enumerate}
The elements of $\Jj(P)$ are called constructible right ideals.
\end{definition}

\begin{remark}\label{rem: constructible right ideals}
The general form of a constructible right ideal is given in \cite[Equation~(5)]{Li2012}. However, the semigroups of interest to us are all right LCM semigroups, and in this case the constructible right ideals are precisely the principal right ideals; that is, $\JJ(P)=\{pP:p\in P\}$. Notice that if $P$ is an arbitrary right LCM semigroup, then principal right ideals associated to distinct $p,q\in P$ are not necessarily distinct, as is the case in the example discussed in Section~\ref{subsec: self-similar actions}. If $(G,P)$ is a quasi-lattice ordered group, then $p\not=q\Longrightarrow pP\not= qP$.  
\end{remark}

We can now give Li's definition of the full semigroup $C^*$-algebra for $P$.

\begin{definition}[{\cite[Definition 2.2]{Li2012}}]\label{def:semigroupalgebra}
Suppose $P$ is a discrete left-cancellative semigroup. Let $C^*(P)$ be the universal $C^*$-algebra generated by isometries $\{v_p : p\in P\}$ and projections $\{e_X : X \in \Jj(P)\}$ satisfying
\begin{enumerate}
 \item[(1)] $v_pv_q = v_{pq}$;
 \item[(2)] $v_p e_X v_p^* = e_{pX}$;
 \item[(3)] $e_P = 1$ and $e_\varnothing = 0$; and
 \item[(4)] $e_X e_Y = e_{X \cap Y}$,
\end{enumerate}
for all $p,q\in P$ and $X,Y\in\JJ(P)$.
\end{definition}

\begin{example}\label{QLOC*}
When $(G,P)$ is quasi-lattice ordered, Nica \cite{n} constructed a $C^*$-algebra $C^*(G,P)$ which is universal for isometric representations $V$ of $P$ satisfying
\begin{equation}\label{eq: nica cov}
V_p^*V_q=
\begin{cases}
V_{p^{-1}(p\vee q)}V_{q^{-1}(p\vee q)}^* & \text{if $p\vee q<\infty$}\\
0 & \text{if $p\vee q=\infty$.}
\end{cases}
\end{equation} 
Li showed in \cite[Section~2.4]{Li2012} that $C^*(P)\cong C^*(G,P)$.
\end{example}

\section{Zappa-Sz\'{e}p products}\label{sec: bowtie products}

The \ZS product of two groups was developed by G. Zappa in \cite{Z} and J. Sz\'{e}p in \cite{Szep1,Szep2,Szep3}. Brin \cite{Brin}  described \ZS products in a much broader generality, including \ZS products of semigroups. The following definition is given in \cite[Lemma 3.13(xv)]{Brin}.

\begin{definition}\label{def: the external bowtie}
Suppose $A$ and $U$ are semigroups with identities $e_A$ and $e_U$, respectively. Assume the existence of maps $A \times U \to U$ given by $(a,u) \mapsto a \cdot u$, and $A \times U \to A$ given by $(a,u) \mapsto a|_u$, satisfying
 \begin{tabbing}
 \,\,\,(B1) $e_A \cdot u = u$;\hspace{4cm} \=  (B5) $a \cdot (uv) = (a \cdot u)(a|_u \cdot v)$; \\
 \,\,\,(B2) $(a b) \cdot u = a \cdot (b \cdot u)$; \>(B6) $a|_{uv} = (a|_u)|_v$;\\
 \,\,\,(B3) $a \cdot e_U = e_U$; \> (B7) $e_A|_{u}=e_A$; and\\
 \,\,\,(B4) $a|_{e_U}=a$;  \> (B8) $(a b)|_u = a|_{b \cdot u} b|_u$.
\end{tabbing}
%\begin{enumerate}
%\item[(B1)] $e_A \cdot u = u$;
%\item[(B2)] $(a b) \cdot u = a \cdot (b \cdot u)$;
%\item[(B3)] $a \cdot e_U = a$;
%\item[(B4)] $a|_{e_U}=a$;
%\item[(B5)] $a \cdot (uv) = (a \cdot u)(a|_u \cdot v)$;
%\item[(B6)] $a|_{uv} = (a|_u)|_v$;
%\item[(B7)] $e_A|_{u}=e_A$;
%\item[(B8)] $(a b)|_u = a|_{b \cdot u} b|_u$,
%\end{enumerate}
The external \ZS product $U \bowtie A$ is the cartesian product $U \times A$ with multiplication given by
\begin{equation}\label{bowtie_action}
(u,a)(v,b) = (u(a \cdot v), (a|_v)b)
\end{equation}
For each $a\in A$ and $u\in U$ we call $a|_u$ the {\em restriction} of $a$ to $u$, and $a\cdot u$ the {\em action} of $a$ on $u$. 
\end{definition}

The following result \cite[Lemma 3.9]{Brin} describes the internal \ZS product.

\begin{prop}\label{prop: internal bowtie}
Suppose $P$ is a semigroup with identity. Suppose that  $U,A \subseteq P$ are subsemigroups  of $P$ with $U \cap A =\{e\}$ and such that for all $p \in P$ there exists unique $(u,a) \in U \times A$ such that $p=u a$. For  $a\in A$ and $u\in U$ define $a \cdot u \in U$ and $a|_u \in A$ by $a u = (a \cdot u) a|_u$. The action and restriction maps so defined satisfy conditions (B1)--(B8)  and $P \cong U \bowtie A$.
% from Definition~\ref{def: the external bowtie}.
\end{prop}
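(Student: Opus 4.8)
The plan is to exploit the unique factorisation $p=ua$ at every step. For $a\in A$ and $u\in U$ the element $au\in P$ has a unique expression $au=vb$ with $v\in U$ and $b\in A$, and by construction $v=a\cdot u$, $b=a|_u$; in particular $au=(a\cdot u)(a|_u)$ always. Each of (B1)--(B8) then follows by writing an appropriate product in $P$ in two ways as (element of $U$)(element of $A$) and comparing via uniqueness. Since $U$ and $A$ are subsemigroups, products of elements of $U$ stay in $U$ and likewise for $A$, so the candidate factors always lie where they should; note also that $U$ and $A$ share the identity $e$ of $P$, since $e\in U\cap A=\{e\}$.

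First I would treat the ``boundary'' axioms. From $e_A u=eu=u=u\,e=u\,e_A$ and uniqueness of factorisation we read off $e_A\cdot u=u$ and $e_A|_u=e_A$, which are (B1) and (B7). From $a\,e_U=a\,e=a=e\,a=e_U\,a$ we get $a\cdot e_U=e_U$ and $a|_{e_U}=a$, which are (B3) and (B4).

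Next the ``cocycle'' axioms, each extracted from a single computation. Expanding
\[
(ab)u=a(bu)=a\big((b\cdot u)(b|_u)\big)=\big(a\cdot(b\cdot u)\big)\big((a|_{b\cdot u})(b|_u)\big),
\]
the first factor lies in $U$ and the second in $A$; comparing with $(ab)u=\big((ab)\cdot u\big)\big((ab)|_u\big)$ and invoking uniqueness gives (B2) and (B8) at once. Symmetrically,
\[
a(uv)=(au)v=\big((a\cdot u)(a|_u)\big)v=\big((a\cdot u)(a|_u\cdot v)\big)\big((a|_u)|_v\big),
\]
and comparison with $a(uv)=\big(a\cdot(uv)\big)\big(a|_{uv}\big)$ gives (B5) and (B6).

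For the isomorphism, define $\varphi\colon U\bowtie A\to P$ by $\varphi(u,a)=ua$. Existence of factorisations makes $\varphi$ surjective and uniqueness makes it injective. Using the multiplication \eqref{bowtie_action},
\[
\varphi\big((u,a)(v,b)\big)=u(a\cdot v)(a|_v)b=u(av)b=(ua)(vb)=\varphi(u,a)\varphi(v,b),
\]
so $\varphi$ is a bijective homomorphism; since $P$ is associative, transporting its multiplication along $\varphi$ shows that $U\bowtie A$ with \eqref{bowtie_action} is a semigroup and $\varphi$ a semigroup isomorphism. The argument is entirely routine; the only points to watch are the identification $e_U=e_A=e$ needed for the base cases and the trivial-but-necessary check that each proposed factor genuinely lies in $U$ or in $A$ before uniqueness is applied. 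I do not anticipate any real obstacle.
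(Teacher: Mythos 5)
Your proof is correct, and it is the standard direct argument: every axiom (B1)--(B8) is read off from the unique $U\times A$ factorisation of a suitable product in $P$, and the map $(u,a)\mapsto ua$ transports the multiplication \eqref{bowtie_action} onto that of $P$, which in particular settles associativity of the external product. The paper itself gives no proof of this proposition, citing Brin's Lemma 3.9 instead, so your write-up simply supplies the verification that the paper delegates to that reference; the points you flag (that $e\in U\cap A$ serves as the common identity, and that each proposed factor lies in $U$ or $A$ before uniqueness is invoked) are exactly the ones that need saying, and nothing is missing.
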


The following result gives sufficient conditions for $U\bowtie A$ to be a right LCM semigroup.

\begin{lemma}\label{Davs delight}
Suppose $U$ and $A$ are left cancellative semigroups with maps $(a,u) \mapsto a \cdot u$ and $(a,u) \mapsto a|_u$ satisfying (B1)--(B8) of Definition~\ref{def: the external bowtie}. Moreover, suppose $U$ is a right LCM semigroup, $\Jj(A)$ is totally ordered by inclusion, and $u \mapsto a\cdot u$ is a bijective map from $U$ to $U$ for each $a\in A$. Then $U \bowtie A$ is a right LCM semigroup.
\end{lemma}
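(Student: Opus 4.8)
The plan is to establish the two defining properties of a right LCM semigroup: first that $U \bowtie A$ is left cancellative, and second that any two elements with a right common multiple have a right least common multiple. For the cancellative property, I would take $(u,a)(v,b) = (u,a)(w,c)$ and expand using \eqref{bowtie_action} to get $(u(a\cdot v),(a|_v)b) = (u(a\cdot w),(a|_w)c)$. Left cancellativity of $U$ gives $a\cdot v = a\cdot w$, and since $u\mapsto a\cdot u$ is a bijection, $v=w$; then the second coordinates give $(a|_v)b = (a|_v)c$, so left cancellativity of $A$ gives $b=c$.

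For the existence of right LCMs, I would start with two elements $(u,a)$ and $(v,b)$ that have a common right multiple, and analyse what a common right multiple looks like. Writing a candidate common multiple and using \eqref{bowtie_action} together with the bijectivity of the action maps, one sees that $(u,a)(x,c) = (v,b)(y,d)$ forces $u(a\cdot x) = v(b\cdot y)$ in $U$; so the first coordinate is a common right multiple of $u$ and $v$ in $U$, which (since $U$ is right LCM) has a right LCM, say $w = u\,p = v\,q$ for suitable $p,q \in U$. I would then use the bijectivity hypothesis to write $p = a\cdot x_0$ and $q = b\cdot y_0$ for unique $x_0 \in U$ and $y_0\in U$, giving a candidate first coordinate $w$ for the right LCM of $(u,a)$ and $(v,b)$. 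The second coordinate must be a common right multiple of $a|_{x_0}$ (times something) and $b|_{y_0}$ (times something) in $A$; here is where the hypothesis that $\Jj(A)$ is totally ordered by inclusion enters. Since $\Jj(A)$ is totally ordered, for any two elements $\alpha,\beta\in A$ arising this way we have $\alpha A \subseteq \beta A$ or $\beta A \subseteq \alpha A$, so one is a right multiple of the other, and a right LCM of $\alpha,\beta$ exists trivially (it is the larger of the two). Assembling the first coordinate from the right LCM in $U$ and the second from this comparison in $A$ should give a right common multiple of $(u,a)$ and $(v,b)$, and I would then verify directly that it divides every other right common multiple, using the LCM property in $U$ for the first coordinate and the total-ordering of $\Jj(A)$ for the second.

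The main obstacle I anticipate is bookkeeping in the second coordinate: when one forms products via \eqref{bowtie_action}, the restriction that lands in the $A$-coordinate is not simply $a|_x$ but depends on how the $U$-parts interact through the $a\cdot(-)$ maps, so one must carefully track how the $A$-component of a generic right common multiple of $(u,a)$ and $(v,b)$ is built, and then show that the total-ordering of $\Jj(A)$ makes these $A$-components comparable in the right way. In particular one needs that after fixing the $U$-coordinate to be the right LCM $w$, the remaining freedom in the $A$-coordinate is governed by principal right ideals in $A$, so that the total order gives a canonical minimal choice. A secondary point to handle carefully is that right LCMs in $U$ are only unique up to right multiplication by a unit (by Lemma~\ref{lem:LCMs}), so I should check the construction does not depend on that choice, or simply fix one choice of $w$ throughout. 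Once the $A$-coordinate is correctly identified, the verification that the constructed element is a \emph{least} common multiple should be a routine matching of coordinates.
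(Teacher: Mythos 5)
Your outline follows the paper's proof almost exactly up to the last step: the left-cancellativity argument is the same, and your candidate LCM is the paper's candidate. Concretely, with $w=uu''=vv''$ a right LCM of $u,v$ in $U$, surjectivity gives $x,y\in U$ with $a\cdot x=u''$, $b\cdot y=v''$, and total ordering of $\Jj(A)$ lets you assume $b|_y=a|_x a''$; then $(u,a)(x,a'')=(w,b|_y)=(v,b)(y,e_A)$, so $(w,b|_y)$ is a common right multiple. The unit-ambiguity of $w$ is harmless, as you note: one simply fixes a choice.

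The gap is in the step you label ``routine matching of coordinates''. The total ordering of $\Jj(A)$ is only used to \emph{build} the candidate; it cannot carry the minimality verification. An arbitrary common multiple $(u,a)(s,c)=(v,b)(t,d)$ has $U$-coordinate $u(a\cdot s)=v(b\cdot t)=ww'$ with $w'$ in general not $e_U$, so you are never in the situation ``the $U$-coordinate is fixed to be $w$'', and in the $A$-coordinate you need an exact identity, not mere comparability of principal right ideals (knowing $\alpha A\subseteq\beta A$ or $\beta A\subseteq\alpha A$ does not produce the required factorisation through $(w,b|_y)$). What closes the argument — and is the one non-obvious move in the paper's proof — is a further use of bijectivity of the action: by surjectivity of $z\mapsto b|_y\cdot z$ write $w'=b|_y\cdot t'$; then $v(b\cdot t)=ww'=v(b\cdot y)(b|_y\cdot t')=v\bigl(b\cdot(yt')\bigr)$ by (B5), so left cancellation in $U$ and injectivity of the action of $b$ give $t=yt'$; finally (B6) yields $(v,b)(t,d)=\bigl(w(b|_y\cdot t'),(b|_y)|_{t'}d\bigr)=(w,b|_y)(t',d)$, exhibiting the arbitrary common multiple as a right multiple of the candidate. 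Without this step (or an equivalent), your plan stalls exactly at the bookkeeping obstacle you flagged, since the tool you propose for it ($\Jj(A)$ totally ordered) is not the one that resolves it.
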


\begin{proof}
We first show that $U\bowtie A$ is left cancellative. Suppose $(u,a)(v,b)=(u,a)(w,c)$. Then $u(a\cdot v)=u(a\cdot w)$ and $a|_vb=a|_wc$. Since $U$ is left cancellative, we have $a\cdot v=a\cdot w$. Since the action of $a$ is injective, we have $v=w$. Then $a|_v=a|_w$, and because $A$ is left cancellative we have  $b=c$. So $(v,b)=(w,c)$.

Now suppose $(u,a), (v,b) \in U \bowtie A$ have a right common multiple; so there exists elements $(u',a'), (v',b') \in U \bowtie A$ such that $(u,a)(u',a') = (v,b)(v',b')$. In particular, $u(a \cdot u')=v(b \cdot v') \in U$. Since $U$ is a right LCM semigroup, $u$ and $v$ have a right LCM $w\in U$. Fix $u''$ and $v''$ such that $u u''=w=vv''$. Since we have assumed $(u,a) \mapsto a \cdot u$ is surjective for fixed $a$, there exists $x,y \in U$ such that $a \cdot x = u''$ and $b \cdot y = v''$. Since $\Jj(A)$ is totally ordered, we can assume without loss of generality that $a|_x A \cap b|_y A = b|_y A$. Fix $a''$ such that $a|_x a'' = b|_y$. Then
\[
(u,a)(x,a'')= (u(a \cdot x), a|_x a'') = (uu'',b|_y)= (w,b|_y),
\]
and
\[
(v,b)(y,e)=(v(b \cdot y),b|_y )=(vv'',b|_y)=(w,b|_y).
\]
So $(w,b|_y)$ is a right common multiple of $(u,a)$ and $(v,b)$.

To see that $(w,b|_y)$ is a right LCM, suppose $(u,a)(s,c)=(v,b)(t,d)$. Then $u(a \cdot s) = v(b \cdot t) = ww'$ for some $w' \in U$. Since $(u,a) \mapsto u \cdot a$ is surjective, there exists $t' \in U$ such that $w' = b|_{y} \cdot t'$. Then
\begin{align*}
v(b \cdot t)&=ww'=vv''w'=v(b\cdot y)(b|_y \cdot t')=v(b\cdot(yt'))
\end{align*}
so $b \cdot t = b \cdot (yt')$. Since $(a,u) \mapsto a \cdot u$ is injective for fixed $a$, this implies that $t=yt'$.
%Then
%\[
%b|_y \cdot t' = (a|_x a'') \cdot t' = a|_x \cdot (a'' \cdot t') = a|_x \cdot s'
%\]
%so that $a'' \cdot t' = s'$ by injectivity of the map $(a,u) \mapsto a \cdot u$ for fixed $a$. Again, using \eqref{m''} we calculate
%\begin{align*}
%b|_t = b|_{yt'} &=(a|_x a'')|_{t'}=(a|_{x (a'' \cdot t')}(a''|_{t'})=(a|_{xs'})(a''|_{t'})=(a|_s)(a''|_{t'})
%\end{align*}
%Then because $\Jj(A)$ is totally ordered we must have
%\[
% a|_s A \cap b|_t A = b|_t A
%\]
Hence $(v,b)(t,d) = (w,b|_y)(t',d) = (u,a)(s,c)$, and so $(w,b|_y)$ is a right LCM.
\end{proof}

\begin{remark}\label{rem: the lcm formula}
Calculations in the above proof produce some useful observations about  semigroups~$U$ and~$A$ satisfying the hypothesis of Lemma~\ref{Davs delight}.
Firstly,
\[
(u,a)U\bowtie A\cap (v,b)U\bowtie A=\varnothing \Longleftrightarrow uU\cap vU=\varnothing.
\]
Secondly, a right LCM can be rapidly identified in the following cases.
\begin{enumerate}
\item[(a)] If $u,v\in U$ have right LCM $z\in U$, then $(z,e_A)$ is a right LCM of $(u,e_A)$ and $(v,e_A)$.
\item[(b)] For $a\in A$ and $u\in U$ a right LCM of $(e_U,a)$ and $(u,e_A)$ is 
\begin{equation*}\label{eq: LCM of special elts}
(u,a|_z)=(e_U,a)(z,e_A)=(u,e_A)(e_U,a|_z),
\end{equation*}
where $z$ is the unique element in $U$ such that $a\cdot z=u$.
\end{enumerate}
\end{remark}

Perhaps surprisingly, a number of interesting examples are \ZS products of the form $U\bowtie A$ where $U$ and $A$ satisfy the hypotheses of Lemma~\ref{Davs delight}.
We now examine some of them.

\subsection{Baumslag-Solitar groups}\label{subsec: baumslag-solitar groups} Let $c$ and $d$ be positive integers. The Baumslag-Solitar group $BS(c,d)$ is the group with presentation $\langle a,b \mid a b^c = b^d a \rangle$. We denote by $BS(c,d)^+$ the subsemigroup of $BS(c,d)$ generated by $a$ and $b$.

By \cite[Chapter IV, Theorem 2.1]{LS}, every element $p \in BS(c,d)^+$ admits a unique normal form
\[
 p = b^{\alpha_1}ab^{\alpha_2}a\dots b^{\alpha_n}ab^{\beta},
\]
where each $\alpha_i \in \{0,\dots,d-1\}$ and $\beta\in\NN$. Consider the following subsemigroups of $BS(c,d)^+$:
\[
 U := \langle e, a, ba, \dots, b^{d-1}a \rangle\quad\text{and}\quad A := \langle e, b \rangle.
\]
We have $U \cap A = \{e\}$. We can also see from the normal form that each $p=b^{\alpha_1}ab^{\alpha_2}a\dotsb^{\alpha_n}ab^{\beta} \in BS(c,d)^+$ can be written uniquely in $UA$ as the product of $b^{\alpha_1}ab^{\alpha_2}a\dotsb^{\alpha_n}a\in U$ and $b^\beta\in A$. So Proposition \ref{prop: internal bowtie} implies that $BS(c,d)^+ \cong U\bowtie A$. On generators, the action and restriction maps satisfy
\begin{equation}\label{eq: BS action}
 b \cdot b^k a = 
 \begin{cases}
 b^{k+1} a & \text{ if $k < d-1$}\\
  a & \text{ if $k = d-1$}
  \end{cases}
  \end{equation}
and
\begin{equation}\label{eq: BS restriction}
 b|_{b^k a} = 
  \begin{cases}
e & \text{ if $k < d-1$}\\
  b^c & \text{ if $k = d-1$.}
  \end{cases}
\end{equation}
The subsemigroup $U$ is the free semigroup on $d$ generators, and hence is right LCM. The subsemigroup $A$ is left cancellative, and for each $\alpha,\beta\in\N$ we have $b^{\max\{\alpha,\beta\}}A\subseteq b^{\min\{\alpha,\beta\}}A$, and so $\JJ(A)$ is totally ordered by inclusion. It follows from \eqref{eq: BS action} that the action of each $b^\beta\in A$ on $U$ is bijective. So the hypotheses of Lemma~\ref{Davs delight} are satisfied.

\subsection{The semigroup $\nxn$}\label{subsec: n by n times}

Consider the semigroups $\N = \{n \in \Z \mid n \geq 0\}$ under addition, $\N^\times = \{n\in \Z \mid n\ge 1\}$ under multiplication, and $\Q_+^* = \{q\in\Q \mid q>0\}$ under multiplication. Consider the semidirect product $\Q\rtimes\Q_+^*$, where
\[
(r,a)(q,b)= (r+aq, ab) \quad\text{ for } r,q \in \Q \text{ and } a,b \in \Q_+^*.
\]
The semidirect product $\nxn$ is a subsemigroup of $\qxq$, and the pair $(\qxq,\nxn)$ is quasi-lattice ordered \cite[Proposition~2.1]{lr}. We will now describe $\nxn$ as a \ZS product.

Consider the following subsemigroups of $\N\rtimes\N^\times$:
\[
U := \{(r,x) : x \in \N^\times, 0\le r \le x-1\}\quad\text{and}\quad A := \{(m,1) : m \in \N\}.
\]
We have $U\cap A=\{(0,1)\}$, which is the identity of $\nxn$. We can write each $(m,a)\in\nxn$ uniquely as a product in $UA$ via
\[
(m,a) = \Big( m\,(\Mod\, a) ,a \Big) \Big(\frac{m-(m\,(\Mod\, a))}{a},1 \Big).
\]
So the hypotheses of Proposition~\ref{prop: internal bowtie} are satisfied, and hence $\nxn\cong U\bowtie A$. The action and restriction maps are given by
\begin{equation}\label{eq: n by n times a and r}
(m,1) \cdot (r,x) = ((m+r)\,(\Mod\, x),x)\quad\text{and}\quad (m,1)|_{(r,x)} = \Big( \frac{m+r - ((m+r)\,(\Mod\, x))}{x},1 \Big).
\end{equation}
Both $U$ and $A$ are subsemigroups of a left cancellative semigroup $\nxn$, and hence are both left cancellative. For each $m,n\in\N$ we have $(\max\{m,n\},1)A\subseteq (\min\{m,n\},1)A$, and so $\JJ(A)$ is totally ordered by inclusion. The next result shows that $U$ is right LCM.

\begin{lemma}\label{lem: U for n by n times is right lcm}
Consider the subsemigroup $U$ of $\nxn$ described above. Let $(r,x),(s,y)\in U$. If $(r+x\N)\cap (s+y\N)\not=\varnothing$, then the right LCM of $(r,x)$ and $(s,y)$ is $(l,\lcm(x,y))$, where $l$ is the least element of $(r+x\N)\cap (s+y\N)$. If $(r+x\N)\cap (s+y\N)=\varnothing$, then $(r,x)$ and $(s,y)$ have no common multiple.
\end{lemma}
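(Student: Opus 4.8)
The plan is to verify directly that the candidate element $(l,\lcm(x,y))$ lies in $U$, is a right common multiple of $(r,x)$ and $(s,y)$, and is minimal among such; and separately to handle the empty-intersection case. First I would observe that $(r,x)(p,z) = (r + xp, xz)$ for $(p,z)\in U$, so the right multiples of $(r,x)$ in $U$ are exactly the pairs $(r+xp, xz)$ with $0\le p\le z-1$; in particular the first coordinate of any right multiple lies in $r + x\N$ and the second coordinate is a multiple of $x$. Hence if $(r+x\N)\cap(s+y\N)=\varnothing$, a common right multiple would force a common value in both cosets, a contradiction; this disposes of the second assertion immediately.

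For the first assertion, assume $(r+x\N)\cap(s+y\N)\neq\varnothing$ and let $l$ be its least element. Standard elementary number theory (the intersection of two arithmetic progressions, when nonempty, is itself an arithmetic progression with common difference $\lcm(x,y)$) gives $(r+x\N)\cap(s+y\N) = l + \lcm(x,y)\N$. I would first check $(l,\lcm(x,y))\in U$: since $l\ge 0$ and $l$ is the \emph{least} element of the intersection, $l < x$ would follow if $l\ge x$ contradicted minimality... more carefully, $l - x \in r + x\N$ and if also $l-x\in s+y\N$ we'd contradict minimality, so one must argue $l < \lcm(x,y)$ directly: if $l\ge \lcm(x,y)$ then $l - \lcm(x,y)$ is a smaller element of the intersection, contradiction; hence $0\le l\le \lcm(x,y)-1$ and $(l,\lcm(x,y))\in U$. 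Writing $l = r + x p = s + y q$ with $p = (l-r)/x$, $q=(l-s)/y$, I would check $(p,\lcm(x,y)/x)$ and $(q,\lcm(x,y)/y)$ lie in $U$ (again using $l<\lcm(x,y)$ to bound $p < \lcm(x,y)/x$ and $q<\lcm(x,y)/y$), so that
\[
(r,x)\,(p,\tfrac{\lcm(x,y)}{x}) = (l,\lcm(x,y)) = (s,y)\,(q,\tfrac{\lcm(x,y)}{y}),
\]
establishing that $(l,\lcm(x,y))$ is a right common multiple.

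For minimality, let $(t,w)$ be any right common multiple of $(r,x)$ and $(s,y)$ in $U$. By the first paragraph, $t\in (r+x\N)\cap(s+y\N) = l + \lcm(x,y)\N$ and $w$ is a common multiple of $x$ and $y$, hence a multiple of $\lcm(x,y)$. Write $t = l + \lcm(x,y)\,k$ and $w = \lcm(x,y)\,m$; then $(l,\lcm(x,y))(k,m) = (l + \lcm(x,y)k, \lcm(x,y)m) = (t,w)$, and $(k,m)\in U$ because $t < w$ forces $\lcm(x,y)k < \lcm(x,y)m$, i.e. $k < m$, and $k\ge 0$. Thus $(t,w)$ is a right multiple of $(l,\lcm(x,y))$, so $(l,\lcm(x,y))$ is a right LCM. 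I expect the only mildly delicate points to be the bookkeeping that keeps every intermediate pair inside $U$ (the constraint $0\le r\le x-1$), and invoking the elementary fact that a nonempty intersection of two arithmetic progressions is a progression with difference $\lcm(x,y)$ — which is a form of the Chinese Remainder Theorem and could be cited or given a one-line proof.
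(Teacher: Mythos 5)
Your proof is correct, but it follows a genuinely different route from the paper's. The paper does not redo the arithmetic: it quotes \cite[Remark~2.3]{lr}, which already gives the least upper bound of $(r,x)$ and $(s,y)$ in the quasi-lattice ordered group $(\qxq,\nxn)$ as $(l,\lcm(x,y))$ (or $\infty$ when the cosets are disjoint), and the only remaining work is to check that this element and the complementing factors $(j,x')$, $(k,y')$ with $l=r+xj=s+yk$ and $\lcm(x,y)=xx'=yy'$ actually lie in the subsemigroup $U$; that is done by the same subtract-and-contradict-minimality trick you use to show $l<\lcm(x,y)$ (assume $j\ge x'$, deduce $k\ge y'$, subtract to get a smaller element of $(r+x\N)\cap(s+y\N)$). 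You instead prove everything from scratch: you characterise the right multiples of $(r,x)$ inside $U$, invoke the elementary fact that a nonempty intersection of two arithmetic progressions equals $l+\lcm(x,y)\N$, and verify membership in $U$ and minimality directly. Your version is longer but self-contained (no appeal to the quasi-lattice structure of $\nxn$), and it makes explicit the final step --- that an arbitrary common multiple $(t,w)\in U$ factors as $(l,\lcm(x,y))(k,m)$ with $(k,m)\in U$ because $t<w$ forces $k<m$ --- which the paper compresses into ``the result follows''. One small point to tidy: in your argument that $l<\lcm(x,y)$, you should note explicitly that $l-\lcm(x,y)\ge 0$ together with $l-\lcm(x,y)\equiv r \pmod{x}$ and $0\le r<x$ places $l-\lcm(x,y)$ back in $r+x\N$ (and similarly in $s+y\N$); that one-line check is what makes the contradiction with minimality of $l$ complete.
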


\begin{proof}
Let $(r,x),(s,y)\in U$. We know from \cite[Remark~2.3]{lr} that 
\[
(r,x)\vee (s,y)=
\begin{cases}
(l,\lcm(x,y)) & \text{if $(r+x\N)\cap (s+y\N)\not=\varnothing$}\\
\infty & \text{if $(r+x\N)\cap (s+y\N)=\varnothing$,}
\end{cases}
\]
where $l$ is the least element of $(r+x\N)\cap (s+y\N)$. Suppose $(r+x\N)\cap (s+y\N)\not=\varnothing$, and let $j,k\in\N$ with $l=r+xj=s+yk$. Also let $x',y'\in\N^\times$ with $\lcm(x,y)=xx'=yy'$. Then $(l,\lcm(x,y))=(r,x)(j,x')=(s,y)(k,y')$. For $(l,\lcm(x,y))$ to be an element of $U$, it suffices to show that $j<x'$, which would also imply $k<y'$. Suppose for contradiction that $j\ge x'$. Then we must also have $k\ge y'$. Write $j=x'+j'$ and $k=y'+k'$ for some $j',k'\in\N$. Then
\[
r+xj=s+yk\Longleftrightarrow r+ xx'+xj'=s+yy'+yk'\Longleftrightarrow r+xj'=s+yk',
\]
which contradicts that $l$ is the least element of $(r+x\N)\cap (s+y\N)$. Hence we must have $j<x'$, and the result follows.
\end{proof}

To check that $U$ and $A$ satisfy all the hypotheses of Lemma~\ref{Davs delight}, it remains to check that for each $(m,1)\in A$, the map $u \mapsto (m,1) \cdot u$ is a bijection on $U$. Fix $(m,1)\in A$. We have
\begin{align*}
(m,1) \cdot (r,x) = (m,1)\cdot (s,y) &\Longleftrightarrow ((m+r)\,(\Mod\, x),x)=((m+s)\,(\Mod\, y),y)\\
&\Longleftrightarrow x=y\text{ and } r=s\\
&\Longleftrightarrow (r,x)=(s,y).
\end{align*}
So the action of $(m,1)$ is injective. To see that the action of $(m,1)$ is surjective, fix $(r,x)\in U$. Let
\[
a:=
\begin{cases}
r-\big(m\,(\Mod\, x)\big) & \text{if $r\ge m\,(\Mod\, x)$}\\
x-(m\,(\Mod\, x)-r) & \text{if $r< m\,(\Mod\, x)$.}
\end{cases}
\]
Then $(a,x)\in U$ and $(m,1)\cdot (a,x)=(r,x)$. So the action of $(m,1)$ is surjective. We can now apply Lemma~\ref{Davs delight} to see that $\nxn\cong U\bowtie A$ is a right LCM semigroup. Of course, we know from \cite{lr} that $\nxn$ is quasi-lattice ordered, which is stronger than right LCM. But we need to know each pair $(U, A)$ in our examples satisfy the hypotheses of Lemma~\ref{Davs delight} so we can apply our later results, as noted in the following remark. 

\begin{remark}\label{rem: }
In general there is no unique way of decomposing a semigroup into a \ZS product, as can be illustrated with the semigroup $\nxn$. In addition to the decomposition $\nxn\cong U\bowtie A$ described above, we have $\nxn\cong\N\bowtie\N^\times$, where $a\cdot m=am$ and $a|_m=a$. Also note that even though $U\bowtie A$ satisfies the hypotheses of Lemma~\ref{Davs delight}, the \ZS product $\N\bowtie\N^\times$ does not. So while the $C^*$-algebras $C^*(U\bowtie A)$ and $C^*(\N\bowtie\N^\times)$ as described in Section~\ref{sec: the generalised Toeplitz algebra} are isomorphic, the presentation given in Theorem~\ref{thm: main c star theorem} only applies to $C^*(U\bowtie A)$.
\end{remark}

\subsection{The semigroup $\zxz$}\label{subsec: the semigroup zxz}

Denote $\Z^\times:=\Z\setminus\{0\}$. The $ax+b$-semigroup over $\Z$ is the semidirect product $\zxz$, where $(m,a)(n,b)=(m+an,ab)$. Define subsemigroups
\[
U=\{(r,x):x\ge 1,\,0\le r<x\}\quad\text{and}\quad A=\Z\times\{1,-1\}.
\]
So $U$ is the same as the semigroup appearing in Section~\ref{subsec: n by n times}, and $A$ is a group. We have $U\cap A=\{(0,1)\}$, which is the identity of $\zxz$. For each $(m,a)\in\zxz$ we can uniquely write
\[
(m,a)=
\Big( m\,(\Mod\, |a|) ,|a| \Big) \Big(\frac{m-(m\,(\Mod\, |a|))}{|a|},\frac{a}{|a|} \Big)\in UA.
\]
So we can apply Proposition~\ref{prop: internal bowtie} to see that $\zxz\cong U\bowtie A$. The action and restriction maps are given by
\[
(m,j) \cdot (r,x) = ((m+jr)\,(\Mod\, x),x)\quad\text{and}\quad (m,j)|_{(r,x)} = \Big( \frac{m+jr - ((m+jr)\,(\Mod\, x))}{x},j \Big),
\]
for $j\in\{1,-1\}$.

Both $U$ and $A$ are left cancellative. Since $A$ is a group, $\JJ(A)=\{A\}$ is trivially totally ordered. In Lemma~\ref{lem: U for n by n times is right lcm} we proved that $U$ is right LCM. To show that $U$ and $A$ satisfy the hypotheses of Lemma~\ref{Davs delight}, we just need to check that the action of each fixed $(m,j)$ is bijective. 

Fix $(m,j)\in A$. We have
\begin{align*}
(m,j) \cdot (r,x) = (m,j)\cdot (s,y) &\Longleftrightarrow ((m+jr)\,(\Mod\, x),x)=((m+js)\,(\Mod\, y),y)\\
&\Longleftrightarrow x=y\text{ and } (m+jr)-(m+js)\in x\Z\\
&\Longleftrightarrow x=y\text{ and } j(r-s)\in x\Z\\
&\Longleftrightarrow x=y\text{ and } r=s\\
&\Longleftrightarrow (r,x)=(s,y)\\
\end{align*}
So the action of $(m,j)$ is injective. To see that the action of $(m,j)$ is surjective, fix $(r,x)\in U$. Let
\[
s:=
\begin{cases}
\Big(j\big(r-\big(m\,(\Mod\, x)\big)\big)\Big)(\Mod\, x) & \text{if $r\ge m\,(\Mod\, x)$}\\
\Big(j\big(x-(m\,(\Mod\, x)-r)\big)\Big)(\Mod\, x) & \text{if $r< m\,(\Mod\, x)$.}
\end{cases}
\]
Then $(s,x)\in U$ and $(m,j)\cdot (s,x)=(r,x)$. So the action of $(m,j)$ is surjective.

\subsection{Self-similar actions}\label{subsec: self-similar actions}

Let $X$ be a finite alphabet. We write $X^n$ for the set of words of length $n$ in~$X$ and $X^*:=\bigcup_{n=0}^\infty X^n$. The set $X^*$ has a geometric realisation as a homogenous rooted tree with root $\varnothing$; that is, vertices in the tree are associated with words in $X^*$ and for each $w \in X^*$ there is an edge from $w$ to $wx$ for all $x \in X$. We will be considering subgroups of the automorphism group on the rooted tree $X^*$.

A faithful action of a group $G$ on $X^*$ is \emph{self-similar} if for every $g \in G$ and $x\in X$, there exist unique $g|_x \in G$ such that
\begin{equation}\label{SSA_cond}
 g\cdot (xw) = (g\cdot x) (g|_x \cdot w).
\end{equation}
The group element $g|_x$ is called the \emph{restriction of $g$ to $x$}. Notice that restriction extends to words of finite length by iteration. When there is a self-similar action of $G$ on $X^*$, the pair $(G,X)$ is called a {\em self-similar action}.

\begin{lemma}[{\cite[\S1.3]{nek_book}}] \label{lem:props}
Suppose $(G,X)$ is a self-similar action.
\begin{enumerate}
\item For  $g, h\in G$ and $v,w\in X^*$, we have
\[
g|_{vw}=(g|_{v})|_{w}, \quad
gh|_{v} = g|_{h\cdot v} h|_{v}, \quad\text{and}\quad
g|_{v}^{-1} = g^{-1}|_{g\cdot v}.
\]
\item For every $g \in G$, the map $g:X^n \to X^n$ given by $w \mapsto g \cdot w$ is bijective.
\end{enumerate}
\end{lemma}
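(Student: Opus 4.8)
The plan is to prove the three restriction identities in part (1) by unwinding the defining self-similarity relation \eqref{SSA_cond}, extended from single letters to words; part (2) is essentially immediate once the action restricts to each level. Since the statement is attributed to \cite[\S1.3]{nek_book}, I expect a short, direct verification rather than anything clever.

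First I would establish the first identity, $g|_{vw}=(g|_v)|_w$, by induction on $|w|$ (or alternatively on $|v|$). For $v=x\in X$ a single letter, this is just the statement that restriction to a word $xw'$ is computed by iterating single-letter restriction, which is how restriction to finite words was defined; the general case follows by writing $v=xv'$ and applying \eqref{SSA_cond} twice, namely $g\cdot(x v' w) = (g\cdot x)(g|_x\cdot(v'w))$ and then $g|_x\cdot(v'w) = (g|_x\cdot v')((g|_x)|_{v'}\cdot w)$, comparing with the one-step decomposition $g\cdot(vw) = (g\cdot v)(g|_{vw}\cdot w)$, and invoking the uniqueness clause in the definition of self-similarity together with faithfulness of the action. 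The key mechanical point throughout is that uniqueness of the restriction element lets us read off equalities of group elements from equalities of their actions on $X^*$, once we know the action is faithful.

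Next, for the product rule $gh|_v = g|_{h\cdot v}\,h|_v$, I would compute $(gh)\cdot(vw)$ two ways: directly via \eqref{SSA_cond} applied to $gh$, giving $((gh)\cdot v)((gh)|_v\cdot w)$; and via the semigroup action $g\cdot(h\cdot(vw)) = g\cdot\big((h\cdot v)(h|_v\cdot w)\big) = \big(g\cdot(h\cdot v)\big)\big(g|_{h\cdot v}\cdot(h|_v\cdot w)\big) = ((gh)\cdot v)\big((g|_{h\cdot v}\,h|_v)\cdot w\big)$. Since the first components agree, uniqueness forces $(gh)|_v = g|_{h\cdot v}\,h|_v$ as elements acting on $X^*$, hence as group elements by faithfulness. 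The inverse formula $g|_v^{-1} = g^{-1}|_{g\cdot v}$ then follows by setting $h=g^{-1}$ in the product rule: $e = (g^{-1}g)|_v = g^{-1}|_{g\cdot v}\,g|_v$, so $g|_v$ is invertible with the stated inverse (using also $e|_v = e$, which is itself immediate from \eqref{SSA_cond} applied to the identity).

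For part (2), the map $w\mapsto g\cdot w$ on $X^n$ is the restriction of the global action of $g$ on $X^*$, which is a bijection with inverse the action of $g^{-1}$; so it suffices to note that the action preserves word length, which is clear from \eqref{SSA_cond} since $|g\cdot(xw)| = |(g\cdot x)(g|_x\cdot w)| = 1 + |g|_x\cdot w|$ and induction. I do not anticipate a genuine obstacle here; the only point requiring care is being explicit that "restriction to a word" is defined by iterating single-letter restriction, so that the first identity is partly definitional and partly an induction, and keeping the uniqueness-plus-faithfulness argument straight each time we pass from an identity of actions to an identity of group elements.
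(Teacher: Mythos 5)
The paper offers no proof of this lemma---it is quoted directly from Nekrashevych's book \cite[\S1.3]{nek_book}---and your verification is the standard one: extend restriction to words by iterating single-letter restriction, compare the two decompositions of $g\cdot(vwu)$ with the trailing word $u$ ranging over all of $X^*$, and use faithfulness to pass from equality of actions to equality of group elements, with part (2) following from length preservation plus invertibility of the action. Your argument is correct; the only slips are cosmetic: the one-step decomposition should read $g\cdot(vw)=(g\cdot v)(g|_{v}\cdot w)$ rather than $g|_{vw}$, and when invoking uniqueness/faithfulness one must compare the actions on an arbitrary further word (not just on the fixed $w$), exactly as you do in the product-rule computation.
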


We now recall Lawson's \cite{Law} description of self-similar actions as \ZS product semigroups. The tree $X^*$ is naturally a semigroup with concatenation of words as the operation and identity $\varnothing$. Restriction gives a map $G \times X^* \rightarrow G$ by $(g,w)=g|_w$ and the group action defines a map $G \times X^* \rightarrow X^*$ by $(g,w) = g \cdot w$. We need to show that these maps satisfy (B1)--(B8) in Definition \ref{def: the external bowtie}. Since the map $(g,w) = g \cdot w$ is a group action (B1)--(B3) are automatic. The relation (B4) follows from the fact that $g \cdot (\varnothing u)=g \cdot u$. Equation (B5) is the self-similar action condition \eqref{SSA_cond}. Lemma \ref{lem:props} (1) gives (B6) and (B8). That the group action is faithful implies (B7). Therefore, $X^* \bowtie G$ is an external Zappa-Sz\'{e}p product with multiplication given by
\begin{equation}\label{SSA_bowtie}
 (x,g)(y,h) = (x(g\cdot y),g|_y h) \in X^* \times G.
\end{equation}
We note that Lemma \ref{lem:props} (2) implies that for every $g \in G$, the map $w \mapsto g \cdot w$ is bijective. Lawson \cite{Law} goes on to prove the following.

\begin{thm}[{\cite[Propositions 3.5 and 3.6]{Law}}]
Let $(G,X)$ be a self-similar action. With the above product, $X^* \bowtie G$ is a right LCM semigroup with identity $(\varnothing,e)$ and the pair $X^*$ and $G$ satisfy the hypotheses of Lemma \ref{Davs delight}. Moreover, the poset of principal right ideals of $X^*\bowtie G$ is order isomorphic to the poset of principal right ideals of $X^*$.
\end{thm}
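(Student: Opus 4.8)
The plan is to recognise this as an application of Lemma~\ref{Davs delight} with $U=X^*$ and $A=G$, and then to dispatch the statements about the identity and about the poset of principal right ideals by hand. So I would proceed in three stages: verify the hypotheses of Lemma~\ref{Davs delight}; check the identity; and construct the order isomorphism on principal right ideals.

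First I would check the hypotheses of Lemma~\ref{Davs delight}. Both $X^*$, the free monoid on $X$, and $G$, a group, are left cancellative. The monoid $X^*$ is right LCM: if $v,w\in X^*$ have a right common multiple $z$, then $v$ and $w$ are both prefixes of $z$, so one of them, say $w=vu$, is a prefix of the other; then $w$ is itself a right common multiple of $v$ and $w$ and is a right multiple of every other right common multiple, hence a right LCM. (Since $(X^*)^*=\{\varnothing\}$, these right LCMs are even unique.) Since $G$ is a group its only right ideals are $\varnothing$ and $G$, so $\JJ(G)=\{\varnothing,G\}$ is totally ordered by inclusion. Finally, the excerpt has already recorded, via Lemma~\ref{lem:props}(2), that for each $g\in G$ the map $w\mapsto g\cdot w$ is a length-preserving bijection of $X^*$. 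Lemma~\ref{Davs delight} then applies and yields that $X^*\bowtie G$ is a right LCM semigroup.

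Next, the identity claim is a direct computation from \eqref{SSA_bowtie} together with (B1), (B3), (B4), (B7): $(\varnothing,e)(y,h)=(\varnothing(e\cdot y),e|_y h)=(y,h)$ and $(y,h)(\varnothing,e)=(y(h\cdot\varnothing),h|_\varnothing e)=(y,h)$. For the poset statement I would first identify the invertible elements of $X^*\bowtie G$: a length count in \eqref{SSA_bowtie} shows that if $(s,k)$ is invertible then $s=\varnothing$, and conversely $(\varnothing,k)^{-1}=(\varnothing,k^{-1})$, so $(X^*\bowtie G)^*=\{\varnothing\}\times G$. Since $(x,g)=(x,e)(\varnothing,g)$ with $(\varnothing,g)$ invertible, the same argument as in the proof of Lemma~\ref{lem:LCMs} shows the principal right ideal $(x,g)(X^*\bowtie G)$ depends only on $x\in X^*$, and equals $(x',g')(X^*\bowtie G)$ if and only if $x=x'$. (Recall also that by Remark~\ref{rem: constructible right ideals} the constructible right ideals of both $X^*\bowtie G$ and $X^*$ are exactly the principal ones.)

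This lets me define $\Phi\big((x,g)(X^*\bowtie G)\big)=xX^*$, which is then a well-defined bijection from the principal right ideals of $X^*\bowtie G$ onto those of $X^*$. It remains to see $\Phi$ and $\Phi^{-1}$ preserve inclusion: $(x,g)(X^*\bowtie G)\subseteq(y,h)(X^*\bowtie G)$ iff $(x,g)=(y,h)(t,k)$ for some $(t,k)\in X^*\bowtie G$, iff there is $t\in X^*$ with $x=y(h\cdot t)$ (the remaining equation $g=h|_t k$ being solvable for $k$ automatically), iff there is $s\in X^*$ with $x=ys$ (using that $t\mapsto h\cdot t$ is onto $X^*$), iff $xX^*\subseteq yX^*$. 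Hence $\Phi$ is an order isomorphism. The whole argument is essentially bookkeeping once Lemma~\ref{Davs delight} is available; the one point that deserves care is this last step, where surjectivity of the self-similar action $t\mapsto h\cdot t$ is exactly what converts the divisibility condition in $X^*\bowtie G$ into the prefix condition in $X^*$, and I expect that, rather than the right-LCM property itself, to be the only mildly delicate part of the proof.
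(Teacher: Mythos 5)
Your argument is correct, but it is worth noting that the paper does not actually prove this statement: it is quoted from Lawson (\cite[Propositions 3.5 and 3.6]{Law}), with the paper only verifying beforehand that (B1)--(B8) hold and recording, via Lemma~\ref{lem:props}(2), that each $w\mapsto g\cdot w$ is bijective. What you supply is a self-contained replacement for the citation: you check the hypotheses of Lemma~\ref{Davs delight} directly ($X^*$ free hence right LCM, $\JJ(G)=\{\varnothing,G\}$ totally ordered for the group $G$, bijectivity of the action), and you prove the poset statement by identifying $(X^*\bowtie G)^*=\{\varnothing\}\times G$, using the Lemma~\ref{lem:LCMs} argument to see that $(x,g)(X^*\bowtie G)$ depends only on $x$, and then converting divisibility in $X^*\bowtie G$ into the prefix order on $X^*$ via surjectivity of $t\mapsto h\cdot t$ --- which is indeed the one genuinely delicate point, and you handle it correctly. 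The only blemish is a slip of wording in your right-LCM verification for $X^*$: with $w=vu$, what makes $w$ a right LCM of $v$ and $w$ is that every right common multiple of $v$ and $w$ is a right multiple of $w$ (trivially, since it is a right multiple of $w$ by definition), not that ``$w$ is a right multiple of every other right common multiple''; the intended fact is obviously true, so this is cosmetic rather than a gap.
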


\begin{remark}
A semigroup $S$ is called a {\em left Rees monoid} if $S$ is left cancellative, each principal right ideal $sS$ is properly contained in only a finite number of principal right ideals, and $sS \cap tS \neq \varnothing$ implies either $sS \subseteq tS$ or $tS \subseteq sS$. In \cite{Law}, Lawson showed that a semigroup is a left Rees semigroup if and only if it is a \ZS product of a free semigroup by a group acting self-similarly.
\end{remark}

\subsection{The adding machine}\label{subsec: the adding machine} Consider the alphabet $X = \{0,1,\dots,n-1\}$ for some $n\in \N$. There is a self-similar action of $\Z = \langle e,\gamma\rangle$ on the tree $X^*$, where the action and restriction of $\gamma$ on a letter $k\in X$ is given by
\[
 \gamma \cdot k = (k+1)\,(\Mod n)
\]
and
\[
 \gamma|_{k} =
 \begin{cases}
 e & \text{ if $k<n-1$}\\ 
 \gamma & \text{ if $k = n-1$.}
 \end{cases}
\]
The self-similar action $(\Z,X)$ is commonly known as the \emph{adding machine}, or \emph{odometer}. Since the subsemigroup $\N \subset \Z$ is invariant under the restriction map, we may form the \ZS product $X^* \bowtie \N$. Looking at the action and restriction described above, and the action \eqref{eq: BS action} and restriction \eqref{eq: BS restriction} for $\BS(c,d)^+$ with $c=1$ and $d=n$, we see that $X^*\bowtie \N$ is isomorphic to $\BS(1,n)^+$. 

We can also describe $\BS(1,n)^+$ as a subsemigroup of the \ZS product $U\bowtie A$ isomorphic to $\nxn$ from Section~\ref{subsec: n by n times}. Consider the free subsemigroup of $\N\rtimes\N^\times$
\[
 U_n = \langle(0,1),(0,n),(1,n),\dots,(n-1,n)\rangle
\]
and $A=\{(m,1):m\in\N\}$. We see from the action formula given in \eqref{eq: n by n times a and r} that $U_n$ is invariant under the action of $A$, and the \ZS product $U_n \bowtie A$ is isomorphic to $\BS(1,n)^+$.

\subsection{Products of self-similar actions.}\label{subsec:products_SSAs} Suppose $X$ and $Y$ are finite alphabets, and $G$ is a group which acts self-similarly on both $X$ and $Y$. Assume the existence of a bijective map $\theta : Y\times X\to X\times Y$. For each $(y,x)\in Y\times X$ we denote by $\theta_X(y,x)\in X$ and $\theta_Y(x,y)\in Y$ the unique elements satisfying $\theta(y,x) = (\theta_X(y,x),\theta_Y(y,x))$. Let $\F^+_\theta$ denote the semigroup generated by $X\cup Y \cup \{e\}$ with relations $yx = \theta_X(y,x)\theta_Y(y,x)$ for all $x\in X$ and $y\in Y$. Note that these semigroups are the $2$-graphs with a single vertex studied in \cite{dpy, DY2009}.

Repeated applications of the bijection $\theta$ implies that every element $z\in\F^+_\theta$ admits a normal form $z = vw$ where $v \in X^*$ and $w \in Y^*$. The self-similar actions of $G$ on $X$ and $Y$ induce maps $G\times\F_\theta^+\to \F_\theta^+$ and $G\times\F_\theta^+\to G$ given by
\begin{equation}\label{eq: maps for product of seas}
(g,z)\mapsto g \cdot z: = (g\cdot v)(g|_v \cdot w)\quad\text{and}\quad(g,z)\mapsto g|_{z} := (g|_v)|_w,
\end{equation}
respectively. The following result gives necessary and sufficient conditions for the maps in \eqref{eq: maps for product of seas} to give a \zsp $\F_\theta^+\bowtie G$.

\begin{prop} \label{prop:productssa}
The maps given in \eqref{eq: maps for product of seas} induce a Zappa-Sz\'ep product semigroup $\F^+_\theta \bowtie G$ if and only if for all $g\in G$, $x\in X$ and $y\in Y$ we have
\[
 \theta_X(y,x) = g\inv \cdot \theta_X(g\cdot y,g|_y\cdot x)\quad\text{and}\quad \theta_Y(y,x) = g|_{\theta_X(y,x)}\inv \cdot \theta_Y(g\cdot y, g|_y \cdot x).
\]
\end{prop}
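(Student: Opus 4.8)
The plan is to verify the axioms (B1)--(B8) of Definition~\ref{def: the external bowtie} for the maps in \eqref{eq: maps for product of seas}, treating $\F^+_\theta$ and $G$ as the two factors, and to show that exactly the axioms (B5) and (B6)---the ones that mix the action with the structure of $\F^+_\theta$---produce the two stated identities. Several of the axioms are immediate: since $g\cdot z$ restricts to a genuine $G$-action on the free factors, (B1), (B2), (B3) follow by checking on $z=v$ and $z=w$ separately and using associativity of concatenation, while (B4) and (B7) are inherited from the corresponding properties of the self-similar actions of $G$ on $X$ and $Y$ (faithfulness gives (B7)). The axiom (B8), $(gh)|_z = g|_{h\cdot z}\,h|_z$, should follow from Lemma~\ref{lem:props}(1) applied twice, to the $X^*$-part and the $Y^*$-part, together with the definition $g|_z = (g|_v)|_w$; I would first establish (B8) for $z\in X^*$ and $z\in Y^*$ and then glue. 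The content of the proposition is therefore concentrated in (B5) and (B6).

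Next I would isolate the \emph{only} non-trivial compatibility: the normal form $z=vw$ must be preserved by the action, i.e.\ we need $g\cdot(vw)=(g\cdot v)(g|_v\cdot w)$ to again be in normal form, and $g|_{vw}$ must be computed consistently no matter which sequence of applications of $\theta$ we use to reduce a word to normal form. Because every element of $\F^+_\theta$ is obtained from letters by repeatedly rewriting $yx\mapsto \theta_X(y,x)\theta_Y(y,x)$, it suffices to test well-definedness on this single relation. Concretely, I would compute $g\cdot(yx)$ in two ways: directly via \eqref{eq: maps for product of seas} after first rewriting $yx$ as $\theta_X(y,x)\theta_Y(y,x)$ (so $v=\theta_X(y,x)\in X$, $w=\theta_Y(y,x)\in Y$), and via the ``wrong order'' where one pretends $y\in X^*$-slot is empty. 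Demanding that the resulting element of $\F^+_\theta$ agrees---equivalently, that applying $\theta$ to $(g\cdot y,\,g|_y\cdot x)$ returns $(g\cdot\theta_X(y,x),\,g|_{\theta_X(y,x)}\cdot\theta_Y(y,x))$---is precisely the pair of displayed equations after solving for $\theta_X(y,x)$ and $\theta_Y(y,x)$ by applying $g\inv$ and $g|_{\theta_X(y,x)}\inv$ respectively. For the converse direction, I would assume the two identities and show they force $g\cdot(yx)=g\cdot(\theta_X(y,x)\theta_Y(y,x))$ as elements of $\F^+_\theta$ and that $g|_{yx}=g|_{\theta_X(y,x)\theta_Y(y,x)}$, so the maps descend to well-defined maps on $\F^+_\theta$ and then satisfy (B5), (B6) on generators; the general case follows by induction on word length using the already-verified (B2), (B6) for composite restrictions.

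The main obstacle I anticipate is bookkeeping around well-definedness: $\F^+_\theta$ is a quotient of the free monoid on $X\cup Y$, and the maps in \eqref{eq: maps for product of seas} are written in terms of a chosen normal form $vw$, so before one can even assert (B5)/(B6) one must know the maps do not depend on that choice---and a priori a long word may be reducible to normal form along many different chains of $\theta$-moves. I would handle this with a confluence/diamond-lemma style argument: any two reductions differ by a sequence of moves of the form $yx\mapsto\theta_X(y,x)\theta_Y(y,x)$, these are confluent by hypothesis on $\theta$ (this is the content of ``repeated applications of $\theta$ imply a normal form'' already asserted before the proposition), and compatibility of $g\cdot(-)$ and $g|_{(-)}$ with a single elementary move is exactly the pair of equations in the statement. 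Once the single-relation case is in hand, everything else is a routine induction, so I do not expect to grind through it in detail.
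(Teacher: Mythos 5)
Your proposal is correct and follows essentially the same route as the paper: both directions reduce to the single computation of $g\cdot(yx)$ for $y\in Y$, $x\in X$, comparing the normal-form formula $(g\cdot\theta_X(y,x))\bigl(g|_{\theta_X(y,x)}\cdot\theta_Y(y,x)\bigr)$ with $(g\cdot y)(g|_y\cdot x)=\theta_X(g\cdot y,g|_y\cdot x)\theta_Y(g\cdot y,g|_y\cdot x)$, which is exactly the paper's argument (the paper writes the forward direction with $g^{-1}$ acting and invokes Lemma~\ref{lem:props}(1), but this is the same normal-form comparison), and like you it verifies only (B5) in detail, leaving the remaining axioms as routine.
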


\begin{proof} For the forward direction, suppose $\F^+_\theta\bowtie G$ is a Zappa-Sz\'ep product, and fix $g\in G$, $x\in X$ and $y\in Y$. Then
\begin{align}
 \theta_X(y,x)\theta_Y(y,x) &= g\inv\cdot (g\cdot(\theta_X(y,x)\theta_Y(y,x))) \notag \\
 &= g\inv \cdot (g\cdot (yx)) \notag \\
 &= g\inv \cdot ((g\cdot y)(g|_y\cdot x)) \notag \\
 &= g\inv \cdot (\theta_X(g\cdot y,g|_y\cdot x)\theta_Y(g\cdot y,g|_y\cdot x)) \notag \\
 &= (g\inv \cdot \theta_X(g\cdot y,g|_y\cdot x)) (g\inv|_{\theta_X(g\cdot y,g|_y\cdot x)} \cdot \theta_Y(g\cdot y,g|_y\cdot x)). \label{g_inv_formula}
\end{align}
In particular, we see that
\[
 \theta_X(y,x) = g\inv \cdot \theta_X(g\cdot y,g|_y\cdot x),
\]
and $g\cdot \theta_X(y,x) = \theta_X(g\cdot y,g|_y\cdot x)$ so that $g|_{\theta_X(y,x)}\inv = g\inv|_{\theta_X(g\cdot y,g|_y\cdot x)}$ by the third identity in Lemma \ref{lem:props} (1). From \eqref{g_inv_formula}, we also see that
\[
 \theta_Y(y,x) = g|_{\theta_X(y,x)}\inv \cdot \theta_Y(g\cdot y,g|_y\cdot x)
\]
as required.

Conversely, suppose $\theta$ satisfies the above relations. We must show that conditions (B1) -- (B8) of Definition \ref{def: the external bowtie} are satisfied. We check (B5) and leave the remaining computations to the reader. It is enough to verify (B5) on elements $yx\in\F^+_\theta$ where $y \in Y$ and $x\in X$. We compute
\begin{align*}
 g\cdot (yx) &= g\cdot(\theta_X(y,x)\theta_Y(y,x)) \\
 &= (g\cdot\theta_X(y,x)) (g|_{\theta_X(y,x)}\cdot\theta_Y(y,x)) \\
 &= \theta_X(g\cdot y,g|_y\cdot x)\theta_Y(g\cdot y,g|_y\cdot x) \\
 &= (g\cdot y) (g|_y\cdot x),
\end{align*}
as required.
\end{proof}

\begin{remark}\label{rem: product of ssa satisfies conditions}
The semigroup $\F_\theta^+$ is left cancellative by the unique 
factorisation 
property of $k$-graphs, but it is not right LCM in general.
However, there are interesting examples, 
such as Example \ref{exm:productaddingmachines} below,
for which $\F_\theta^+$ is right LCM.
% (Example \ref{exm:productaddingmachines} provides an example where $\F_\theta^+$ is right LCM). 
 Since $G$ is a group, the other hypotheses of Lemma~\ref{Davs delight} are automatically satisfied. So if $\F_\theta^+$ is right LCM, then $\F_\theta^+\bowtie G$ is a right LCM semigroup.
\end{remark} 

\begin{example} \label{exm:productaddingmachines}
In this example we use adding machine actions on two alphabets to
induce a self-similar action of~$\Z$ on a~2-graph with one vertex. 
Fix $m,n \geq 2$ and let $X:=\{x_0,x_1,\cdots,x_{m-1}\}$ and $Y:=\{y_0,y_1,\cdots y_{n-1}\}$. We can write the set $\{0,1,\dots,mn-1\}$ as
\[
 \{i+jm : 0\leq i \leq m-1, 0 \leq j \leq n-1\}\quad\text{and}\quad \{j+in : 0\leq i \leq m-1, 0 \leq j \leq n-1\}.
\]
It follows that there is a bijection $\{0,1,\dots,n-1\}\times\{0,1,\dots,m-1\}\to \{0,1,\dots,m-1\}\times\{0,1,\dots,n-1\}$ sending $(j,i)$ to the pair $(i',j')$ satisfying $j +in=i' + j'm$. This bijection induces a bijection $\theta : Y\times X \to X\times Y$ given by $\theta(y_j,x_i) = (x_{i'},y_{j'})$. The group of integers $\Z = \langle e,\gamma \rangle$ acts self-similarly on both $X^*$ and $Y^*$ by the adding machine action. Recall from Section~\ref{subsec: the adding machine} that the action is given by
\[
\gamma\cdot x_i =
\begin{cases}
x_{i+1} & \text{if $i<m-1$}\\
x_0 & \text{if $i=m-1$}
\end{cases}
\quad\text{and}\quad
\gamma\cdot y_j =
\begin{cases}
y_{j+1} & \text{if $j<n-1$}\\
y_0 & \text{if $j=n-1$,}
\end{cases}
\]
and the restriction is given by
\[
\gamma|_{x_i} =
\begin{cases}
e & \text{if $i<m-1$}\\
\gamma & \text{if $i=m-1$}
\end{cases}
\quad\text{and}\quad
\gamma|_{y_j} =
\begin{cases}
e & \text{if $j<n-1$}\\
\gamma & \text{if $j=n-1$.}
\end{cases}
\]
We leave it to the reader to show that the identities in \eqref{eq: 
maps for 
product of seas} hold in this example, and so we can apply 
Proposition 
\ref{prop:productssa} to get an integer action on a $2$-graph.

We claim that the semigroup $\F_\theta^+$ is right LCM if and only if 
$m$ and 
$n$ are 
coprime. For the reverse implication note that $\F_\theta^+$ is 
isomorphic to 
the subsemigroup of $\nxn$ generated by 
$\{(0,m),\dots,(m-1,m),(0,n),\dots,(n-1,n)\}$, and the arguments in 
the proof 
of Lemma~\ref{lem: U for n by n times is right lcm} show that this 
subsemigroup is 
right LCM. For the forward implication, suppose $m=pa$ and $n=pb$ for 
$p>1$. 
Then
\[
p+(0\times pb)=p+(0\times pa)\quad\text{and}\quad p+(a\times pb)=p+(b\times pa),
\]
and hence the elements $y_px_0=x_py_0$ and $y_px_a=x_py_b$ are 
incomparable 
right common multiples which cannot be larger than any other common 
multiple.

\end{example}

\section{The $C^*$-algebra $C^*(U\bowtie A)$}\label{sec: the generalised Toeplitz algebra}

In this section we will assume that $U$ and $A$ are semigroups satisfying the hypotheses of Lemma~\ref{Davs delight}; so $U\bowtie A$ 
%exists and 
is a right LCM semigroup. Consider the $C^*$-algebra $C^*(U\bowtie A)$ obtained by applying Li's construction as described in Section~\ref{subset: semigroup C*-algebras} to $U\bowtie A$. In this case $C^*(U\bowtie A)$ is the universal $C^*$-algebra generated by isometries $\{v_{(u,a)} \mid (u,a)\in U\bowtie A\}$ and projections $\{e_{(u,a)}:=e_{(u,a)U\bowtie A} \mid (u,a)\in U\bowtie A\}\cup\{e_\varnothing\}$ satisfying
\begin{enumerate}
 \item[(L1)] $v_{(u,a)}v_{(w,b)} = v_{(u,a)(w,b)}$;
 \item[(L2)] $v_{(u,a)} e_{(w,b)} v_{(u,a)}^* = e_{(u,a)(w,b)}$;
 \item[(L3)] $e_{(e_U,e_A)} = 1$ and $e_\varnothing = 0$; and
 \item[(L4)] $$e_{(u,a)} e_{(w,b)} = 
 \begin{cases}
 e_{(z,c)} & \text{if $(u,a)U\bowtie A\cap (w,b)U\bowtie A=(z,c)U\bowtie A$}\\
 0 & \text{if $(u,a)U\bowtie A\cap (w,b)U\bowtie A=\varnothing$.}
 \end{cases}$$
\end{enumerate}

Notice that (L2) and (L3) imply that 
\begin{equation}\label{eq: Li range projections}
v_{(u,a)} v_{(u,a)}^* = e_{(u,a)}\quad\text{for all $(u,a)\in U\bowtie A$}.
\end{equation}

The main result of this section is to give an alternative presentation of $C^*(U\bowtie A)$ in terms of isometric representations of the individual semigroups $U$ and $A$. First we need the notion of a covariant representation of a right LCM semigroup.

%\begin{definition}\label{def: covariance}
%Let $P$ be a right LCM semigroup. A {\em covariant representation} of $P$ in a $C^*$-algebra $B$ is an isometric representation $t$ satisfying
%\[
%t_p^*t_q=
%\begin{cases}
%t_{p'}t_{q'}^* & \text{if $pP\cap qP=pp'P=qq'P$}\\
%0 & \text{if $pP\cap qP=\varnothing$.}
%\end{cases}
%\]
%\end{definition}
\begin{definition}\label{def: covariance}
Let $P$ be a right LCM semigroup. A {\em covariant representation} of $P$ in a $C^*$-algebra $B$ is an isometric representation $t$ satisfying
\[
t_p^*t_q=
\begin{cases}
t_{p'}t_{q'}^* & \text{if $pP\cap qP=rP$ and $pp'=qq'=r$}\\
0 & \text{if $pP\cap qP=\varnothing$.}
\end{cases}
\]
Since the right LCM of two elements is not in general unique, we need to check that the expression on the right-hand side is well defined. 
\end{definition}

\begin{lemma}\label{lem: well defined covariance}
Let $P$ be a right LCM semigroup, $p,q\in P$ with $pP\cap qP\not=\varnothing$, and $t$ an isometric representation of $P$. Suppose $p', q',r\in P$ with $pP\cap qP=rP$ and $pp'=qq'=r$, and $p'',q'',s\in P$ with $pP\cap qP=sP$ and $pp''=qq''=s$. Then $t_{p'}t_{q'}^*=t_{p''}t_{q''}^*$.
\end{lemma}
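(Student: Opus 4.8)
The plan is to reduce everything to the uniqueness statement for right LCMs, Lemma~\ref{lem:LCMs}, and a small observation about isometric representations of invertible elements. First I would note that since $pP\cap qP=rP=sP$ and $P$ is a right LCM semigroup, both $r$ and $s$ are right LCMs of $p$ and $q$. Hence Lemma~\ref{lem:LCMs} gives $u\in P^*$ with $s=ru$.

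Next I would exploit left cancellativity. From $pp'=r$ and $pp''=s=ru=pp'u$ we get $p''=p'u$, and symmetrically from $qq'=r$, $qq''=s=ru=qq'u$ we get $q''=q'u$. Thus $t_{p''}=t_{p'u}=t_{p'}t_u$ and $t_{q''}=t_{q'u}=t_{q'}t_u$, using that $t$ is a representation of $P$.

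The one genuinely new point is that $t_u$ is unitary whenever $u\in P^*$. I would argue: $t_u$ is an isometry, so $t_u^*t_u=1$; and $t_ut_{u^{-1}}=t_{uu^{-1}}=t_e=1$, so multiplying on the left by $t_u^*$ gives $t_{u^{-1}}=t_u^*$, whence $t_ut_u^*=t_ut_{u^{-1}}=1$. Then
\[
t_{p''}t_{q''}^*=t_{p'}t_ut_u^*t_{q'}^*=t_{p'}t_{q'}^*,
\]
which is the desired equality.

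I do not anticipate a real obstacle here; the only subtlety is making sure the uniqueness of the right LCM is invoked in the correct form (two elements generating the same principal right ideal, both arising as right common multiples of $p$ and $q$), after which Lemma~\ref{lem:LCMs} applies directly. Everything else is a two-line computation in the $C^*$-algebra.
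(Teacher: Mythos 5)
Your proposal is correct and follows essentially the same route as the paper: both hinge on producing an invertible $u\in P^*$ relating the two right LCMs ($s=ru$, as in Lemma~\ref{lem:LCMs}) and on the observation that $t_u$ is then a unitary with $t_{u^{-1}}=t_u^*$. The only cosmetic difference is that you cancel at the semigroup level to get $p''=p'u$ and $q''=q'u$ before applying $t$, whereas the paper performs the corresponding manipulation inside the $C^*$-algebra by sandwiching with $t_p^*t_p$ and $t_q^*t_q$.
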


\begin{proof}
If $r=s$, then left cancellativity gives $p'=p''$ and $q'=q''$, and the result follows. So suppose $r\not= s$. Since $rP=sP$, there must be $u\in P^*$ with $u\not= e$ and $r=su$. Since $t_{u^{-1}}=t_u^*t_ut_{u^{-1}}=t_u^*$ (so $t_u$ is a unitary), we have
\[
t_{p'}t_{q'}^*=t_p^*t_p t_{p'}t_{q'}^*t_q^*t_q=t_p^*t_{pp'}t_{qq'}^*t_q=t_p^*t_{r}t_{r}^*t_q=t_p^*t_{su}t_{su}^*t_q=t_p^*t_{s}t_{s}^*t_q=t_p^*t_{pp''}t_{qq''}^*t_q=t_{p''}t_{q''}^*.\qedhere
\]
\end{proof}

We now state the main result.

\begin{thm}\label{thm: main c star theorem}
Suppose $U$ and $A$ are semigroups with maps $(a,u) \mapsto a \cdot u$ and $(a,u) \mapsto a|_u$ satisfying (B1)--(B8) of Definition~\ref{def: the external bowtie}. Moreover, suppose $U$ is a right LCM semigroup, $A$ is left cancellative with $\Jj(A)$ totally ordered by inclusion, and for each $a\in A$, the map $u \mapsto a\cdot u$ is bijective. Let $\AA$ be the universal $C^*$-algebra generated by an isometric representation $s$ of $A$ and a covariant representation $t$ of $U$ satisfying 
\begin{enumerate}
\item[(K1)] $s_a t_u=t_{a\cdot u}s_{a|_u}$; and
\item[(K2)] $s_a^*t_u=t_zs_{a|_z}^*$, where $z\in U$ is the unique element satisfying $a\cdot z=u$.
\end{enumerate}
Then there exists an isomorphism $\pi:C^*(U\bowtie A)\to \AA$ such that $\pi(v_{(u,a)})=t_us_a$ and $\pi(e_{(u,a)})=t_us_a s_a^*t_u^*$.
\end{thm}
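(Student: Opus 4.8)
The plan is to construct the isomorphism by building maps in both directions and checking they are mutual inverses, with the bulk of the work going into verifying that the claimed generators of each algebra satisfy the defining relations of the other. First I would define $\pi$ on generators by $\pi(v_{(u,a)}) = t_u s_a$ and $\pi(e_\varnothing)=0$, and check that the elements $t_u s_a$ are isometries (immediate, since $t_u$ and $s_a$ are isometries and $s_a^* t_u^* t_u s_a = s_a^* s_a = 1$) and that they satisfy (L1)--(L4). Relation (L1) amounts to $t_u s_a t_w s_b = t_{u(a\cdot w)} s_{(a|_w)b}$, which follows by inserting (K1) to rewrite $s_a t_w = t_{a\cdot w} s_{a|_w}$ and then using multiplicativity of $t$ on $U$ and of $s$ on $A$; one must note $t$ is genuinely multiplicative on $U$ because a covariant representation restricted to the case $pP\cap qP$ equal to a principal ideal containing both still gives $t_pt_q=t_{pq}$ — more simply, isometric representations are multiplicative by definition. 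For (L2) I would compute $\pi(v_{(u,a)}) e_{(w,b)}^{\mathrm{img}} \pi(v_{(u,a)})^*$ where $e_{(w,b)}^{\mathrm{img}}:=t_w s_b s_b^* t_w^*$, again pushing $s_a$ past $t_w$ via (K1). Relation (L3) is clear. Relation (L4) is where Remark~\ref{rem: the lcm formula} enters: the intersection of principal right ideals in $U\bowtie A$ is empty exactly when the corresponding intersection in $U$ is empty, so the vanishing case reduces to covariance of $t$ on $U$; in the nonvanishing case one uses the explicit right-LCM computation from the proof of Lemma~\ref{Davs delight}, combined with (K1)--(K2) and covariance of $t$, to identify $e_{(u,a)}^{\mathrm{img}} e_{(w,b)}^{\mathrm{img}}$ with $e_{(z,c)}^{\mathrm{img}}$.

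Next I would construct a candidate inverse $\rho:\AA\to C^*(U\bowtie A)$. The natural choice is $\rho(s_a)=v_{(e_U,a)}$ and $\rho(t_u)=v_{(u,e_A)}$. I must check these satisfy: (i) $s\mapsto v_{(e_U,\cdot)}$ is an isometric representation of $A$ — immediate from (L1) and the embedding $a\mapsto(e_U,a)$ being a homomorphism; (ii) $t\mapsto v_{(\cdot,e_A)}$ is a \emph{covariant} representation of $U$ — this uses that $(u,e_A)(w,e_A)=(uw,e_A)$, that $u\mapsto(u,e_A)$ embeds $U$, and crucially the computation of right LCMs in $U\bowtie A$ from Remark~\ref{rem: the lcm formula}(a) together with Nica-type covariance, which in Li's algebra follows from (L1),(L2),(L4) and \eqref{eq: Li range projections} (this is essentially the content of Example~\ref{QLOC*}/\cite[Section~2.4]{Li2012} specialised to right LCM semigroups — one derives $v_p^*v_q$ from $v_p^*v_pv_p^*v_q = v_p^* e_{pP} e_{qP}\cdots$, noting $v_p^* e_{pP}=v_p^*$); (iii) relations (K1) and (K2). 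For (K1), $\rho(s_a t_u) = v_{(e_U,a)}v_{(u,e_A)} = v_{(e_U,a)(u,e_A)} = v_{(a\cdot u,\, a|_u)} = v_{(a\cdot u,e_A)}v_{(e_U,a|_u)} = \rho(t_{a\cdot u}s_{a|_u})$ using \eqref{bowtie_action} and (L1). For (K2), take adjoints of (K1) — or rather, $\rho(s_a^* t_u) = v_{(e_U,a)}^* v_{(u,e_A)}$, and using covariance/Nica-covariance of $v$ on the pair $(e_U,a),(u,e_A)$ together with Remark~\ref{rem: the lcm formula}(b) which gives their right LCM explicitly as $(u,a|_z)=(e_U,a)(z,e_A)=(u,e_A)(e_U,a|_z)$ where $a\cdot z = u$, one gets $v_{(e_U,a)}^* v_{(u,e_A)} = v_{(z,e_A)}v_{(e_U,a|_z)}^* = \rho(t_z s_{a|_z}^*)$.

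Finally I would verify $\rho\circ\pi = \mathrm{id}$ and $\pi\circ\rho=\mathrm{id}$ on generators. One direction: $\rho(\pi(v_{(u,a)})) = \rho(t_u s_a) = v_{(u,e_A)}v_{(e_U,a)} = v_{(u,e_A)(e_U,a)} = v_{(u,a)}$, and similarly on the projections $e_{(u,a)}$ using \eqref{eq: Li range projections}. Other direction: $\pi(\rho(s_a)) = \pi(v_{(e_U,a)}) = t_{e_U} s_a = s_a$ (since $t$ isometric forces $t_{e_U}=1$), and $\pi(\rho(t_u)) = \pi(v_{(u,e_A)}) = t_u s_{e_A} = t_u$. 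By universality each of $\pi,\rho$ is a well-defined $*$-homomorphism, and being mutually inverse on generating sets they are isomorphisms.

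The main obstacle I anticipate is verifying relation (L4) under $\pi$ in the nonvanishing case, and symmetrically verifying Nica-covariance of $u\mapsto v_{(u,e_A)}$: both require carefully tracking the explicit right-LCM formula for $U\bowtie A$ through the action and restriction maps, handling the subtlety (flagged in Lemma~\ref{lem:LCMs} and Lemma~\ref{lem: well defined covariance}) that right LCMs are only unique up to right-multiplication by a unit of $U\bowtie A$, and correctly invoking the total-order hypothesis on $\Jj(A)$ at the step where the proof of Lemma~\ref{Davs delight} chose $a''$ with $a|_x a'' = b|_y$. A secondary technical point is establishing that (L1)--(L4) plus \eqref{eq: Li range projections} actually imply Nica-type covariance for the copy of $U$ sitting inside $C^*(U\bowtie A)$ as $\{v_{(u,e_A)}\}$; this is where invoking Li's identification $C^*(P)\cong C^*(G,P)$-style arguments, specialised to right LCM semigroups where constructible ideals are principal (Remark~\ref{rem: constructible right ideals}), does the work.
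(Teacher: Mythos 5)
Your proposal follows essentially the same route as the paper's proof: construct $V_{(u,a)}=t_us_a$ and verify (L1)--(L4) (using Remark~\ref{rem: the lcm formula} and the total order on $\Jj(A)$ for (L4)), then define the inverse on $T_u=v_{(u,e_A)}$, $S_a=v_{(e_U,a)}$, check covariance of $T$ and (K1)--(K2) via \eqref{eq: Li range projections}, (L4) and Remark~\ref{rem: the lcm formula}(a)--(b), and verify the compositions on generators. The details you flag as the main obstacles are exactly the computations carried out in the paper, so the plan is sound and no gap remains.
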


\begin{proof}
Lemma~\ref{Davs delight} implies that $U\bowtie A$ is right LCM. We first find a family of isometries and projections in $\AA$ satisfying (L1)--(L4). Define $E_\varnothing:=0$, and for each $(u,a)\in U\bowtie A$ define
\[
V_{(u,a)}:=t_us_a\quad\text{and}\quad E_{(u,a)}:=V_{(u,a)}V_{(u,a)}^*=t_us_a s_a^*t_u^*.
\]
Then we use (K1) to get (L1):
\[
V_{(u,a)}V_{(w,b)}=t_us_a t_ws_b=t_ut_{a\cdot w}s_{a|_w}s_b=t_{u(a\cdot w)}s_{a|_wb}=V_{(u(a\cdot w),a|_wb)}=V_{(u,a)(w,b)}.
\]
It follows that
\[
V_{(u,a)}E_{(w,b)}V_{(u,a)}^*=V_{(u,a)}V_{(w,b)}V_{(w,b)}^*V_{(u,a)}^*=V_{(u,a)(w,b)}V_{(u,a)(w,b)}^*=E_{(u,a)(w,b)},
\]
which is (L2). We have $E_{(e_U,e_A)}=t_{e_U} s_{e_A}=1$, and $E_\varnothing=0$ by definition. So (L3) holds.

%%%%%%%%%%%%

To prove (L4) first note that 
\[
E_{(u,a)}E_{(w,b)}=t_us_a s_a^* t_u^* t_w s_b s_b^* t_w^*.
\]
Suppose $(u,a)U\bowtie A\cap (w,b)U\bowtie A=\varnothing$. We know from Remark~\ref{rem: the lcm formula} that this means $uU\cap wU=\varnothing$. Since $t$ is covariant, we have $t_u^*t_w=0$, and hence $E_{(u,a)}E_{(w,b)}=0$.

Now suppose that $(u,a)U\bowtie A\cap (w,b)U\bowtie A=(z,c)U\bowtie A$. Then $uU\cap wU\not=\varnothing$. Let $u',w'\in U$ satisfy $uU\cap wU=uu'U=ww'U$ and $uu' = ww'$. Let $x,y\in U$ be the unique elements satisfying $a\cdot x=u'$ and $b\cdot y=w'$. Using the covariance of $t$ and condition (K2) we have
\begin{align*}
E_{(u,a)}E_{(w,b)}=t_us_a s_a^* t_u^* t_w s_b s_b^* t_w^* &= t_us_a s_a^* t_{u'} t_{w'}^* s_b s_b^* t_w^*\\
&= t_us_a t_xs_{a|_x}^*s_{b|_y}t_y^*s_b^* t_w^*.
\end{align*}
If $a|_x=b|_yb'$ for some $b'\in A$, then $(z,c)=(uu',a|_x)$. We can use (K1) to continue the calculation to get
\[
E_{(u,a)}E_{(w,b)}=t_ut_{a\cdot x}s_{a|_x}s_{b'}^*s_{b|_y}^*t_{b\cdot y}^*t_w^*=t_{u(a\cdot x)}s_{a|_x}s_{a|_x}^*t_{u(a\cdot x)}^*=t_zs_gs_g^*t_z^*=E_{(z,c)}.
\]
A similar argument gives $E_{(u,a)}E_{(w,b)}=E_{(z,c)}$ when $b|_y=a|_xa'$ for some $a'\in A$. So (L4) holds. It now follows from the universal property of $C^*(U\bowtie A)$ that there exists a homomorphism $\pi:C^*(U\bowtie A)\to \AA$ such that $\pi(v_{(u,a)})=t_us_a$ and $\pi(e_{(u,a)})=t_us_a s_a^*t_u^*$. 

To prove that $\pi$ is an isomorphism, we will find its inverse by constructing an isometric representation $S$ of $A$ in $C^*(U\bowtie A)$ and a covariant representation $T$ of $U$ in $C^*(U\bowtie A)$  satisfying (K1) and (K2). For each $u\in U$ and $a\in A$ let
\[
T_u:=v_{(u,e_A)}\quad\text{and}\quad S_a:=v_{(e_U,a)}.
\]
The fact that $T:u\mapsto T_u$ and $S:a\mapsto S_a$ are representations follows from the calculations
\[
T_uT_w=v_{(u,e_A)}v_{(w,e_A)}=v_{(u(e_A\cdot w),e_A|_we_A)}=v_{(uw,e_A)}=T_{uw}
\]
and
\[
S_a S_b=v_{(e_U,a)}v_{(e_U,b)}=v_{(e_U(a\cdot e_U),a|_{e_U} b)}=v_{(e_U,ab)}=S_{ab}.
\]
We also know that $T$ and $S$ are isometric because $v$ is isometric. To see that $T$ is covariant, first observe that \eqref{eq: Li range projections} implies that
\[
T_u^*T_w=v_{(u,e_A)}^*v_{(w,e_A)}=v_{(u,e_A)}^*(v_{(u,e_A)}v_{(u,e_A)}^*v_{(w,e_A)}v_{(w,e_A)}^*)v_{(w,e_A)}=v_{(u,e_A)}^*e_{(u,e_A)}e_{(w,e_A)}v_{(w,e_A)}.
\]
Now suppose $z$ is a right LCM of $u$ and $w$ and write $u'$ and $w'$ for elements of $U$ such that $uu'=ww'=z$. We know from Remark~\ref{rem: the lcm formula}(a) that $(z,e_A)$ is a right LCM of $(u,e_A)$ and $(w,e_A)$. Then (L4) gives
\begin{align*}
T_u^*T_w &= 
\begin{cases}
v_{(u,e_A)}^*e_{(z,e_A)}v_{(w,e_A)} & \text{if $uU\cap wU=zU$,}\\
0 & \text{if $uU\cap wU=\varnothing$}
\end{cases}\\
&= 
\begin{cases}
v_{(u,e_A)}^*v_{(z,e_A)}v_{(z,e_A)}^*v_{(w,e_A)} & \text{if $uU\cap wU=zU$,}\\
0 & \text{if $uU\cap wU=\varnothing$}
\end{cases}\\
&= 
\begin{cases}
v_{(u,e_A)}^*v_{(u,e_A)}v_{(u',e_A)}v_{(w',e_A)}^*v_{(w,e_A)}^*v_{(w,e_A)} & \text{if $uU\cap wU=zU$ and $uu'=ww'=z$,}\\
0 & \text{if $uU\cap wU=\varnothing$}
\end{cases}\\
&= 
\begin{cases}
v_{(u',e_A)}v_{(w',e_A)}^*& \text{if $uU\cap wU=zU$ and $uu'=ww'=z$,}\\
0 & \text{if $uU\cap wU=\varnothing$}
\end{cases}\\
&=
\begin{cases}
T_{u'}T_{w'}^*& \text{if $uU\cap wU=zU$ and $uu'=ww'=z$,}\\
0 & \text{if $uU\cap wU=\varnothing$.}
\end{cases}
\end{align*}
Hence $T$ is covariant. 

We need to show that (K1) and (K2) are satisfied. We have
\[
S_a T_u=v_{(e_U,a)}v_{(u,e_A)}=v_{(a\cdot u,a|_u)}=v_{(a\cdot u,e_A)}v_{(e_U,a|_u)}=T_{a\cdot u}S_{a|_u},
\]
which is (K1). For (K2) first recall from Remark~\ref{rem: the lcm formula}(b) that $(a\cdot z,a|_z)$ is a right LCM of $(e_U,a)$ and $(u,e_A)$, where $z$ is the unique element of $U$ with $a\cdot z=u$. Condition (L4) applied to these elements then becomes $e_{(e_U,a)}e_{(u,e_A)}=e_{(a\cdot z,a|_z)}$. Hence
\begin{align*}
S_a^*T_u&=v_{(e_U,a)}^*v_{(u,e_A)}\\
& =v_{(e_U,a)}^*v_{(e_U,a)}v_{(e_U,a)}^*v_{(u,e_A)}v_{(u,e_A)}^*v_{(u,e_A)}\\
&= v_{(e_U,a)}^*e_{(e_U,a)}e_{(u,e_A)}v_{(u,e_A)}\\
&= v_{(e_U,a)}^*e_{(a\cdot z,a|_z)}v_{(u,e_A)}\\
&= v_{(e_U,a)}^*v_{(a\cdot z,a|_z)}v_{(a\cdot z,a|_z)}^*v_{(u,e_A)}\\
&=v_{(e_U,a)}^*v_{(e_U,a)}v_{(z,e_A)}v_{(e_U,a|_z)}^*v_{(u,e_A)}^*v_{(u,e_A)}\\
&=v_{(z,e_A)}v_{(e_U,a|_z)}^*\\
&=T_zS_{a|_z},
\end{align*}
which is (K2). 

The universal property of $\AA$ gives a homomorphism $\phi:\AA\to C^*(U\bowtie A)$ with $\phi(t_u)=v_{(u,e_A)}$ and $\phi(s_a)=v_{(e_U,a)}$. We now check that $\pi$ and $\phi$ are inverses of each other using the generators:
\[
\pi\circ\phi(t_u)=\pi(v_{(u,e_A)})=t_us_{e_A}=t_u\quad\text{and}\quad \pi\circ(\phi(s_a))=\pi(v_{(e_U,a)})=t_{e_U} s_a=s_a,
\]
and
\begin{align*}
\phi\circ\pi(v_{(u,a)})=\phi(t_us_a)=v_{(u,e_A)}v_{(e_U,a)}=v_{(u,a)}\quad\text{and}\quad \phi\circ\pi(e_{(u,a)})&=\phi\circ\pi(v_{(u,a)}v_{(u,a)}^*)\\
&=v_{(u,a)}v_{(u,a)}^*\\
&= e_{(u,a)}.
\end{align*}
So $\pi:C^*(U\bowtie A)\to\AA$ is the desired isomorphism.
\end{proof}

\begin{remark}\label{qlog_ex}
The $C^*$-algebras associated with \zsps~generalise Nica's $C^*$-algebras of quasi-lattice ordered groups $(G,P)$. Recall from Section~\ref{subset: semigroup C*-algebras} that $C^*(G,P)$ is universal for representations $V$ of $P$ satisfying Equation~\eqref{eq: nica cov}. Consider the \zsp~$P \bowtie \{e\}$, where $e\cdot p=p$ and $e|_p=e$. The semigroups $P$ and $\{e\}$ satisfy the hypotheses of Lemma~\ref{Davs delight}. Conditions (K1) and (K2) from Theorem~\ref{thm: main c star theorem} are satisfied by definition, and so $C^*(P\bowtie\{e\})$ is the universal $C^*$-algebra generated by a covariant representation of $P$. Covariance, as in Definition~\ref{def: covariance}, is precisely Equation~\eqref{eq: nica cov} when $(G,P)$ is quasi-lattice ordered. So $C^*(P\bowtie\{e\})\cong C^*(G,P)$.
\end{remark}

\section{The boundary quotient}\label{sec: the boundary quotient}

In this section we introduce a quotient $\QQ(P)$ of Li's $C^*(P)$ for a right LCM semigroup $P$. Following the terminology of \cite{sy}, we say a subset $F\subseteq P$ is a {\em foundation set} if it is finite and for each $p\in P$ there exists $q\in F$ with $pP\cap qP\not=\varnothing$. When $(G,P)$ is quasi-lattice ordered, the collection of foundation sets in $P$ is described by Crisp and Laca in \cite[Definition 3.4]{cl}.

\begin{definition}\label{def: the boundary quotient}
Let $\QQ(P)$ be the universal $C^*$-algebra generated by isometries $\{v_p:p\in P\}$ and projections $\{e_{pP}:p\in P\}$ satisfying relations (1)--(4) of Definition~\ref{def:semigroupalgebra}, and 
\[
\prod_{p\in F}(1-e_{pP})=0\quad\text{for all foundation sets $F\subset P$}.
\]
We call $\QQ(P)$ the {\em boundary quotient} of $C^*(P)$.
\end{definition}

We now give an alternative presentation for the boundary quotient $\QQ(U\bowtie A)$.

\begin{thm}\label{thm: the boundary quotient for the bowtie}
Suppose $U$ and $A$ are semigroups with maps $(a,u) \mapsto a \cdot u$ and $(a,u) \mapsto a|_u$ satisfying (B1)--(B8) of Definition~\ref{def: the external bowtie}. Moreover, suppose $U$ is a right LCM semigroup, $A$ is left cancellative with $\Jj(A)$ totally ordered by inclusion, and for each $a\in A$, the map $u \mapsto a\cdot u$ is bijective. Then $\QQ(U\bowtie A)$ is the universal $C^*$-algebra generated by an isometric representation $s$ of $A$ and a covariant representation $t$ of $U$ satisfying (K1), (K2) and 
\begin{enumerate}
\item[(Q1)] $s_as_a^*=1$ for all $a\in A$; and
\item[(Q2)] $\displaystyle{\prod_{u\in F}(1-t_ut_u^*)=0}$ for all foundation sets $F\subseteq U$.
\end{enumerate}
\end{thm}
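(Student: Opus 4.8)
The plan is to build directly on Theorem~\ref{thm: main c star theorem}, which supplies an isomorphism $\pi:C^*(U\bowtie A)\to\AA$ with $\pi(e_{(u,a)})=t_us_as_a^*t_u^*$, where $\AA$ is the universal $C^*$-algebra for pairs $(s,t)$ subject to (K1) and (K2). By Definition~\ref{def: the boundary quotient}, $\QQ(U\bowtie A)$ is the quotient of $C^*(U\bowtie A)$ by the ideal $I$ generated by the elements $\prod_{p\in F}(1-e_p)$ as $F$ ranges over the foundation sets of $U\bowtie A$; and the algebra described in the statement — call it $\BB$ — is, by the universal property of $\AA$, the quotient $\AA/J$, where $J$ is the ideal generated by $\{1-s_as_a^*:a\in A\}$ together with $\{\prod_{u\in F}(1-t_ut_u^*):F\text{ a foundation set of }U\}$. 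It therefore suffices to prove the ideal identity $\pi(I)=J$; then $\pi$ descends to an isomorphism $\QQ(U\bowtie A)=C^*(U\bowtie A)/I\to\AA/J=\BB$ carrying $v_{(u,a)}$ to $t_us_a$ and $e_{(u,a)}$ to $t_us_as_a^*t_u^*$, as required.

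Two ingredients feed into $\pi(I)=J$. The first is a dictionary for foundation sets: using the first equivalence of Remark~\ref{rem: the lcm formula}, that $(u,a)(U\bowtie A)\cap(v,b)(U\bowtie A)=\varnothing$ if and only if $uU\cap vU=\varnothing$, one checks that a finite set $F\subseteq U\bowtie A$ is a foundation set for $U\bowtie A$ precisely when $W:=\{w\in U:(w,b)\in F\text{ for some }b\}$ is a foundation set for $U$. Two special cases matter: $\{(e_U,a)\}$ is a foundation set of $U\bowtie A$ for every $a$ (because $uU\cap U=uU\neq\varnothing$), and $\{(u,e_A):u\in F\}$ is one whenever $F$ is a foundation set of $U$. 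The second ingredient is that the range projections $t_ut_u^*$ in $\AA$ commute pairwise: covariance of $t$ gives $t_ut_u^*t_vt_v^*=t_zt_z^*$ for $z$ a right LCM of $u$ and $v$ (and $=0$ when none exists), and this is symmetric in $u$ and $v$; consequently $\prod_{u\in F}(1-t_ut_u^*)$ depends only on the underlying set of $F$.

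Given these, $\pi(I)\subseteq J$ follows because relation (Q1), which holds in $\AA/J$, collapses $\pi(e_{(w,b)})=t_ws_bs_b^*t_w^*$ to $t_wt_w^*$ modulo $J$; hence for a foundation set $F$ of $U\bowtie A$ with associated $W$,
\[
\pi\Bigl(\prod_{(w,b)\in F}(1-e_{(w,b)})\Bigr)=\prod_{(w,b)\in F}(1-t_ws_bs_b^*t_w^*)\equiv\prod_{w\in W}(1-t_wt_w^*)\equiv 0\pmod{J},
\]
the final congruence because $W$ is a foundation set of $U$. For $J\subseteq\pi(I)$ one uses $1-s_as_a^*=\pi(1-e_{(e_U,a)})$, with $\{(e_U,a)\}$ a foundation set, and $\prod_{u\in F}(1-t_ut_u^*)=\pi\bigl(\prod_{u\in F}(1-e_{(u,e_A)})\bigr)$, with $\{(u,e_A):u\in F\}$ a foundation set whenever $F\subseteq U$ is. This establishes $\pi(I)=J$ and hence the theorem.

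I expect the main obstacle to be the foundation-set dictionary together with the bookkeeping needed to recognise that (Q1) ``erases the $A$-coordinate'' of each $e_{(u,a)}$; once that is in place, every computation reduces to manipulations already carried out for Theorem~\ref{thm: main c star theorem}. If one prefers to avoid the ideal formulation, the equivalent route is to check directly that $S_a:=v_{(e_U,a)}$ and $T_u:=v_{(u,e_A)}$ satisfy (Q1) and (Q2) in $\QQ(U\bowtie A)$ — (Q1) because $\{(e_U,a)\}$ is a foundation set and $S_aS_a^*=e_{(e_U,a)}=1$, (Q2) because $\{(u,e_A):u\in F\}$ is a foundation set — and, conversely, that the elements $t_us_as_a^*t_u^*$ satisfy the foundation relations in $\BB$, and then to verify that the two resulting homomorphisms are mutually inverse on generators, exactly as in Theorem~\ref{thm: main c star theorem}.
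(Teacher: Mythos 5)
Your proposal is correct and takes essentially the same route as the paper: the paper likewise works through the presentation from Theorem~\ref{thm: main c star theorem} and establishes exactly your foundation-set dictionary (its Lemma~\ref{lem: foundation sets}, parts (a)--(c)) to show that (Q1) and (Q2) are equivalent to the boundary relations $\prod_{(u,a)\in G}(1-t_us_as_a^*t_u^*)=0$ over foundation sets $G\subset U\bowtie A$, which is just the relation-level phrasing of your ideal identity $\pi(I)=J$. Your explicit remark that the commuting range projections $t_ut_u^*$ let one collapse repeated factors when passing from $G$ to $W$ is a minor point the paper leaves implicit.
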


To prove this result we need the following lemma about foundation sets.

\begin{lemma}\label{lem: foundation sets}
Suppose $U$ and $A$ are semigroups with maps $(a,u) \mapsto a \cdot u$ and $(a,u) \mapsto a|_u$ satisfying (B1)--(B8) of Definition~\ref{def: the external bowtie}. Moreover, suppose $U$ is a right LCM semigroup, $A$ is left cancellative with $\Jj(A)$ totally ordered by inclusion, and for each $a\in A$, the map $u \mapsto a\cdot u$ is bijective.
\begin{enumerate}
\item[(a)] For every $a\in A$ the singleton set $\{(e_U,a)\}$ is a foundation set in $U\bowtie A$.
\item[(b)] For every foundation set $F\subseteq U$ the set $\{(u,e_A):u\in F\}$ is a foundation set in $U\bowtie A$.
\item[(c)] For every foundation set $G$ in $U\bowtie A$ the set $\{u\in U: (u,a)\in G\text{ for some }a\in A\}$ is a foundation set in $U$.
\end{enumerate}
\end{lemma}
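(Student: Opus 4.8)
The plan is to verify each of the three statements directly from the definition of foundation set, using the structural observations about $U\bowtie A$ collected in Remark~\ref{rem: the lcm formula}. The key fact I will use repeatedly is that for $(u,a),(v,b)\in U\bowtie A$ we have $(u,a)U\bowtie A\cap(v,b)U\bowtie A=\varnothing$ if and only if $uU\cap vU=\varnothing$; equivalently, two elements of $U\bowtie A$ have a common right multiple precisely when their $U$-coordinates do. This reduces every intersection condition in $U\bowtie A$ to a condition purely about $U$.

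\textbf{Part (a).} Given $a\in A$ and an arbitrary $(v,b)\in U\bowtie A$, I must produce a common right multiple of $(e_U,a)$ and $(v,b)$. Since $u\mapsto a\cdot u$ is a bijection of $U$, there is $z\in U$ with $a\cdot z=v$; then by Remark~\ref{rem: the lcm formula}(b) the element $(v,a|_z)=(e_U,a)(z,e_A)=(v,e_A)(e_U,a|_z)$ is a right common multiple of $(e_U,a)$ and $(v,e_A)$, and $(v,e_A)$ in turn divides $(v,b)$ on the right since $(v,e_A)(e_U,b)=(v,b)$. Hence $(e_U,a)U\bowtie A\cap(v,b)U\bowtie A\neq\varnothing$, and as $(v,b)$ was arbitrary (and a singleton is certainly finite), $\{(e_U,a)\}$ is a foundation set.

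\textbf{Part (b).} Let $F\subseteq U$ be a foundation set in $U$ and fix $(v,b)\in U\bowtie A$. By the defining property of $F$ there is $u\in F$ with $uU\cap vU\neq\varnothing$; by the equivalence recalled above this gives $(u,e_A)U\bowtie A\cap(v,b)U\bowtie A\neq\varnothing$. Since $\{(u,e_A):u\in F\}$ is finite, it is a foundation set in $U\bowtie A$.

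\textbf{Part (c).} Let $G$ be a foundation set in $U\bowtie A$ and set $F:=\{u\in U:(u,a)\in G\text{ for some }a\in A\}$, which is finite because $G$ is. Given $v\in U$, apply the foundation property of $G$ to the element $(v,e_A)$: there is some $(u,a)\in G$ with $(u,a)U\bowtie A\cap(v,e_A)U\bowtie A\neq\varnothing$, whence $uU\cap vU\neq\varnothing$, and $u\in F$ by construction. So $F$ is a foundation set in $U$.

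\textbf{Anticipated obstacle.} There is no serious obstacle here: once the bijectivity of the $A$-action and the intersection criterion from Remark~\ref{rem: the lcm formula} are in hand, all three parts are short. The one point requiring a little care is part (a), where surjectivity of $u\mapsto a\cdot u$ is exactly what guarantees the needed common multiple exists; without it $\{(e_U,a)\}$ need not be a foundation set. I would make sure to invoke Remark~\ref{rem: the lcm formula}(b) explicitly there rather than leaving the common multiple unnamed.
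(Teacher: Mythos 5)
Your parts (b) and (c) are fine: the criterion $(u,a)U\bowtie A\cap(v,b)U\bowtie A=\varnothing\Longleftrightarrow uU\cap vU=\varnothing$ of Remark~\ref{rem: the lcm formula} applies verbatim (its hypotheses are those of this lemma), and with it (b) and (c) come out essentially as in the paper, which instead exhibits the common multiples explicitly, e.g.\ $(w,b|_x)=(u,e_A)(u',b|_x)=(v,b)(x,e_A)$ for (b).

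Part (a), however, contains a genuine misstep as written. You exhibit $(v,a|_z)$ as a common right multiple of $(e_U,a)$ and $(v,e_A)$, note that $(v,e_A)$ right-divides $(v,b)$, and conclude that $(e_U,a)$ and $(v,b)$ have a common right multiple. That inference is invalid: the divisibility points the wrong way. Knowing $(v,b)$ is a right \emph{multiple} of $(v,e_A)$ says nothing about elements of $(v,b)U\bowtie A$; the inference scheme ``$p,q$ have a common multiple and $q$ divides $r$, hence $p,r$ have a common multiple'' fails already in the free monoid on $\{x,y\}$ with $p=x$, $q=e$, $r=y$. To finish along your constructive line you must reconcile the $A$-coordinates $a|_z$ and $b$, and this is exactly where the paper uses that $\JJ(A)$ is totally ordered: choose $a',b'\in A$ with $a|_za'=bb'=:c$ (one of $a',b'$ being $e_A$), so that $(v,c)=(e_U,a)(z,a')=(v,b)(e_U,b')$ lies in both principal right ideals. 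Alternatively, your own key fact settles (a) with no construction at all, since $e_UU\cap vU=vU\neq\varnothing$ forces $(e_U,a)U\bowtie A\cap(v,b)U\bowtie A\neq\varnothing$; but note that under that route your closing claim that surjectivity of $u\mapsto a\cdot u$ is ``exactly'' what (a) needs is misplaced --- surjectivity (and the total order on $\JJ(A)$) are consumed in proving the criterion of Remark~\ref{rem: the lcm formula}, not in applying it. Either repair is short, but the step as you wrote it does not follow.
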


\begin{proof}
To prove (a), fix $a\in A$ and consider an arbitrary $(u,b)\in U\bowtie A$. Let $z$ be the unique element of $U$ with $a\cdot z=u$. Let $a',b'\in A$ with $a|_za'=bb'=:c$. (Since $\JJ(A)$ is totally ordered we know that at least one of $a'$ or $b'$ is $e_A$.) Then 
\[
(u,c)=(e_U,a)(z,a')\quad\text{and}\quad (u,c)=(u,b)(e_U,b').
\]
Hence $(e_U,a)U\bowtie A\cap (u,b)U\bowtie A\not=\varnothing$, and so (a) holds.

For (b), let $F$ be a foundation set in $U$ and let $(v,b)\in U\bowtie A$. Then there exists $u\in F$ with $uU\cap vU\not=\varnothing$. Let $u',v',w\in U$ with $w=uu'=vv'$. Let $x$ be the unique element of $U$ with $b\cdot x=v'$. Then
\[
(w, b|_x)=(u,e_A)(u', b|_x) \quad\text{and}\quad (w, b|_x) = (v,b)(x,e_A).
\]
Hence $(u,e_A)U\bowtie A\cap (v,b)U\bowtie A\not=\varnothing$, and so $\{(u,e_A):u\in F\}$ is a foundation set in $U\bowtie A$.

For (c), let $G$ be a foundation set in $U\bowtie A$ and let $v\in U$. Then there exists $(u,a)\in G$ with $(u,a)U\bowtie A\cap (v,e_A)U\bowtie A\not=\varnothing$. But this means there exists $u',v'$ with $uu'=vv'$, and hence $uU\cap vU\not=\varnothing$. So $\{u\in U: (u,a)\in G\text{ for some }a\in A\}$ is a foundation set in $U$. 
\end{proof}

\begin{proof}[Proof of Theorem~\ref{thm: the boundary quotient for the bowtie}]
Under the presentation of $C^*(U\bowtie A)$ established in Theorem~\ref{thm: main c star theorem}, products $\prod_{(u,a)\in G}(1-e_{(u,a)})$ over foundation sets $G\subset U\bowtie A$ correspond to $ \prod_{(u,a)\in G} (1 - t_u s_a s_a^* t_u^*)$. So it suffices to show that conditions (Q1) and (Q2) are equivalent to the condition
\[
 \prod_{(u,a)\in G} (1 - t_u s_a s_a^* t_u^*) = 0
\]
for all foundation sets $G \subset U\bowtie A$. To see this, first suppose that (Q1) and (Q2) hold and fix a foundation set $G\subset U\bowtie A$. Then $s_as_a^*=1$ for each $a\in A$ by (Q1), and hence
\[
 \prod_{(u,a)\in G} (1 - t_u s_a s_a^* t_u^*) = \prod_{(u,a)\in G} (1 - t_u t_u^*).
 \]
 Since $\{u\in U: (u,a)\in G\text{ for some }a\in A\}$ is a foundation set by Lemma \ref{lem: foundation sets}, we know from (Q2) that the above product is zero. Conversely, suppose
\[
 \prod_{(u,a)\in G} (1 - t_u s_a s_a^* t_u^*) = 0
\]
for all foundation sets $G\subset U\bowtie A$. Fix $F\subset U$ a foundation set. Then by Lemma \ref{lem: foundation sets}, the set $F'=\{(u,e_A):u\in F\} \subset U\bowtie A$ is a foundation set, and hence
\[
\prod_{u\in F} (1-t_u t_u^*) = \prod_{u\in F'} (1-t_u s_{e_A} s_{e_A}^* t_u^*) = 0.
\]
Likewise, for any $a\in A$ Lemma \ref{lem: foundation sets} implies the singleton set $\{(e_U,a)\} \subset U\bowtie A$ is a foundation set. So
\[
 1-s_as_a^*=1-t_{e_U} s_a s_a^* t_{e_U}^* = 0,
\]
and hence $s_a s_a^* = 1$ as required.
\end{proof}

\begin{remark}\label{rem: all the boundary quotients}
We can see from the presentation of $\QQ(U\bowtie A)$ that we potentially have two other quotients of $C^*(U\bowtie A)$. We denote by $C^*_A(U\bowtie A)$ the quotient obtained from adding relation (Q1) to the relations of $C^*(P)$, and by $C^*_U(U\bowtie A)$ the quotient obtained from adding relation (Q2). These quotients are interesting in their own right (in the case of $\nxn$ these quotients have been studied in \cite{BaHLR}), and we will discuss them further throughout the next section.
\end{remark}

\begin{remark}\label{rem: general boundary quotient}
The definition of the boundary quotient from Definition~\ref{def: the boundary quotient} has a natural generalisation to arbitrary discrete left cancellative semigroups. For such a $P$ we say $F\subset\JJ(P)$ is a {\em foundation set} if $F$ is finite, and for each $Y\in\JJ(P)$ there exists $X\in F$ with $X\cap Y\not=\varnothing$. We define $\QQ(P)$ to be the universal $C^*$-algebra generated by isometries $\{v_p:p\in P\}$ and projections $\{e_{X}:X\in \JJ(P)\}$ satisfying relations (1)--(4) of Definition~\ref{def:semigroupalgebra}, and 
\[
\prod_{X\in F}(1-e_{X})=0\quad\text{for all foundation sets $F\subset P$}.
\]

\end{remark}

\section{Examples}\label{sec: examples}

\subsection{Baumslag-Solitar groups}\label{subsec: baumslag-solitar groups C*} 

Consider the Baumslag-Solitar group $BS(c,d)$, for positive integers $c$ and $d$. Recall from Section~\ref{subsec: baumslag-solitar groups} that $\BS(c,d)^+\cong U\bowtie A$, where $U\cong \F_d^+$, $A\cong \N$, and the action and restriction maps are given in \eqref{eq: BS action} and \eqref{eq: BS restriction}.

\begin{prop}\label{prop: BS BQ}
The boundary quotient $\QQ(\BS(c,d)^+)$ is the universal $C^*$-algebra generated by a unitary $s$ and isometries $t_1,\dots, t_d$ satisfying
\begin{enumerate}
\item[(1)] $\sum_{i=1}^dt_it_i^*=1$;
\item[(2)] $st_i=t_{i+1}$ for $1\le i <d$; and
\item[(3)] $st_d=t_1s^c$.
\end{enumerate} 
Moreover, $\QQ(\BS(c,d)^+)$ is isomorphic to the category of paths algebra $C^*(\Lambda)$ from \cite{Sp}.
\end{prop}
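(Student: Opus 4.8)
The plan is to prove the proposition in two stages: first derive the stated generators-and-relations presentation of $\QQ(\BS(c,d)^+)$ from Theorem~\ref{thm: the boundary quotient for the bowtie}, and then identify the resulting algebra with Spielberg's category of paths algebra $C^*(\Lambda)$.

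For the first stage, I would apply Theorem~\ref{thm: the boundary quotient for the bowtie} with $U\cong\F_d^+$ and $A\cong\N$, which is legitimate since Section~\ref{subsec: baumslag-solitar groups} verified that $(U,A)$ satisfies the hypotheses of Lemma~\ref{Davs delight}. The theorem says $\QQ(\BS(c,d)^+)$ is universal for an isometric representation $s$ of $A=\N$ (hence a single isometry, which I write $s:=s_{b}$) and a covariant representation $t$ of $U=\F_d^+$ (hence $d$ isometries $t_1,\dots,t_d$ corresponding to the free generators $a,ba,\dots,b^{d-1}a$) satisfying (K1), (K2), (Q1) and (Q2). Now I unwind each relation. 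Relation (Q1) applied to the generator $b$ of $A$ gives $ss^*=1$, so $s$ is a unitary. For a free semigroup on $d$ generators the foundation sets are exactly those finite subsets $F\subseteq X^*$ such that every infinite path meets a word in $F$; the minimal such set $\{a,ba,\dots,b^{d-1}a\}$ gives, via (Q2) and covariance (which makes the $t_it_i^*$ mutually orthogonal), the Cuntz relation $\sum_{i=1}^d t_it_i^* = 1$ — and this single relation implies (Q2) for all foundation sets. Next, (K1) applied to $s_b$ and the generator $b^ka$ of $U$, using the action formula \eqref{eq: BS action} and restriction formula \eqref{eq: BS restriction}, yields $st_i = t_{i+1}s_{e} = t_{i+1}$ for $1\le i<d$ and $st_d = t_1 s_{b^c} = t_1 s^c$; these are relations (2) and (3). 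Finally I must check that (K2) and covariance of $t$ are not extra constraints: since $s$ is a unitary, $s^*_a t_u = t_z s^*_{a|_z}$ can be rewritten using (K1) and unitarity of $s$, so (K2) follows from (K1); and covariance of $t$ on a free semigroup with the Cuntz relation is automatic (the $t_i$ have orthogonal ranges summing to $1$, forcing $t_i^*t_j=\delta_{ij}$, and the right LCM structure of $\F_d^+$ makes the general covariance identity a consequence). So the universal algebra for (K1)--(Q2) collapses to the universal algebra for the three stated relations.

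For the second stage, I would recall Spielberg's construction: a category of paths $\Lambda$ is a small cancellative category with no inverses, and $C^*(\Lambda)$ is generated by partial isometries subject to Cuntz-Krieger-type relations over "maximal exhaustive" subsets. The paper~\cite{Sp} (in its Baumslag-Solitar section) exhibits a specific category of paths $\Lambda$ with one object whose $C^*$-algebra models $\BS(c,d)^+$; its generators and relations should match (1)--(3) above after translating $s$ and the $t_i$ into the generating partial isometries of $\Lambda$. The cleanest argument is to invoke both universal properties: build a $*$-homomorphism $\QQ(\BS(c,d)^+)\to C^*(\Lambda)$ by checking the images of $s,t_1,\dots,t_d$ in $C^*(\Lambda)$ satisfy (1)--(3), and a reverse homomorphism $C^*(\Lambda)\to\QQ(\BS(c,d)^+)$ by checking the generators of $\Lambda$ map to elements satisfying Spielberg's relations; then verify the two maps are mutually inverse on generators.

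I expect the main obstacle to be the second stage, specifically matching conventions: extracting from \cite{Sp} the precise category of paths $\Lambda$ attached to $BS(c,d)$, its generating set, and its list of relations, and then exhibiting an explicit bijection between those generators and $\{s,t_1,\dots,t_d\}$ under which Spielberg's relations become exactly (1)--(3). There is genuine bookkeeping in confirming that the "exhaustive set" relations in $C^*(\Lambda)$ reduce to the single Cuntz relation $\sum t_it_i^*=1$ and nothing more, and that no relation of $C^*(\Lambda)$ is lost in the translation. The first stage is comparatively routine — it is essentially a substitution of the explicit formulas \eqref{eq: BS action} and \eqref{eq: BS restriction} into (K1), plus the observation that for a finite-rank free semigroup the foundation-set relation (Q2) is equivalent to its instance on the standard generating set — but I would still take care to justify that (K2) and full covariance are redundant given (1)--(3), so that the presentation is genuinely as small as claimed.
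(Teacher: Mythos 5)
Your proposal is correct and follows essentially the same route as the paper's proof: apply Theorem~\ref{thm: the boundary quotient for the bowtie} to $U\cong\F_d^+$, $A\cong\N$, note that (Q1) makes $s$ unitary, that (Q2) reduces to the generating foundation set $\{a,ba,\dots,b^{d-1}a\}$ giving (1), that (K1) on generators gives (2)--(3), that (K2) follows from (K1) and unitarity, and then match the presentation with \cite[Theorem~3.23]{Sp}. Your additional checks (mutual orthogonality of the $t_it_i^*$ forcing covariance, and the single Cuntz relation implying (Q2) for arbitrary foundation sets) are details the paper leaves implicit, and they are correct.
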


\begin{proof}
First note that $U$ and $A$ satisfy the hypotheses of Theorem \ref{thm: the boundary quotient for the bowtie}, so we can use the given presentation of $\QQ(\BS(c,d)^+)$. That $s$ is unitary follows from (Q1). Since $U$ is $\F_d^+$, it suffices to only consider the foundation set consisting of generators of $\F_d^+$ in (Q2). Hence we have (1). Relations (2) and (3) are (K1). Relation (K2) follows from (2) and (3) and that $s$ is unitary. It follows immediately from the generators and relations presented in \cite[Theorem~3.23]{Sp} that $\QQ(\BS(c,d)^+)$ is isomorphic to $C^*(\Lambda)$.
\end{proof} 

We are now able to use \cite[Remark~3.25]{Sp} to link $\QQ(\BS(c,d)^+)$ with topological graph algebras.

\begin{cor}
The $C^*$-algebra $\QQ(\BS(c,d)^+)$ is isomorphic to the topological graph algebra $\OO(E_{d,c})$ from \cite{Kat}.
\end{cor}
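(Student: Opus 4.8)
The plan is to invoke the already-established isomorphism from \cite[Remark~3.25]{Sp} (or the underlying construction of \cite{Kat}) that identifies the category of paths algebra $C^*(\Lambda)$ with the topological graph algebra $\OO(E_{d,c})$. Since Proposition~\ref{prop: BS BQ} above already proves $\QQ(\BS(c,d)^+)\cong C^*(\Lambda)$, the corollary follows by composing the two isomorphisms. So the real content is simply the one-line citation chain $\QQ(\BS(c,d)^+)\cong C^*(\Lambda)\cong\OO(E_{d,c})$.

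First I would recall from \cite{Sp} the construction of $C^*(\Lambda)$ for the Baumslag-Solitar category of paths, and the remark therein (Remark~3.25) that realises this algebra as the Katsura topological graph algebra $\OO(E_{d,c})$ associated to a specific topological graph $E_{d,c}$ built from the circle. Next I would cite Proposition~\ref{prop: BS BQ}, which gives $\QQ(\BS(c,d)^+)\cong C^*(\Lambda)$. Composing these two isomorphisms gives $\QQ(\BS(c,d)^+)\cong\OO(E_{d,c})$, which is the assertion.

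There is essentially no obstacle here: the corollary is a formal consequence of the preceding proposition together with a result quoted verbatim from \cite{Sp}. The only thing one might wish to double-check is that the topological graph $E_{d,c}$ as defined in \cite{Sp}'s remark coincides with the one in Katsura's framework \cite{Kat}; but since \cite{Sp} already makes this identification, we may simply appeal to it. Thus the "proof" is genuinely a two-step citation, and I would write it as such, with no further computation required.

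\begin{proof}
By Proposition~\ref{prop: BS BQ} we have $\QQ(\BS(c,d)^+)\cong C^*(\Lambda)$. By \cite[Remark~3.25]{Sp}, the category of paths algebra $C^*(\Lambda)$ is isomorphic to the topological graph algebra $\OO(E_{d,c})$ of \cite{Kat}. Composing these isomorphisms gives $\QQ(\BS(c,d)^+)\cong\OO(E_{d,c})$.
\end{proof}
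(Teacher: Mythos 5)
Your proposal is correct and matches the paper's own argument: the corollary is deduced exactly as you describe, by combining Proposition~\ref{prop: BS BQ} with the identification $C^*(\Lambda)\cong\OO(E_{d,c})$ from \cite[Remark~3.25]{Sp}. No further computation is needed.
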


From the discussion in \cite[Example~A.6]{Kat}, we have the following corollary.
 
\begin{cor}\label{cor: BS and Kirchberg}
If $c\not\in d\Z$, then $\QQ(\BS(c,d)^+)$ is a Kirchberg algebra.
\end{cor}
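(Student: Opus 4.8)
The plan is to deduce this from the two preceding results, namely the isomorphism $\QQ(\BS(c,d)^+)\cong \OO(E_{d,c})$ with a topological graph algebra of Katsura, together with the structural facts about such algebras recorded in \cite[Example~A.6]{Kat}. So essentially no direct computation with generators and relations is needed; the work is all in invoking Katsura's dichotomy/classification machinery for topological graph algebras under the stated hypothesis $c\notin d\Z$.

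The key steps I would carry out are as follows. First, recall the description of the topological graph $E_{d,c}$ appearing in \cite[Remark~3.25]{Sp} and \cite[Example~A.6]{Kat}: its edge and vertex spaces are built from $\T$ (or a circle) with range and source maps given by $z\mapsto z$ and $z\mapsto z^{d}$ composed with multiplication by a root of unity governed by $c$, so that the associated dynamics is (a variant of) the map $z\mapsto z^{d/\gcd(c,d)}$. Second, I would check that $E_{d,c}$ has no sources and no "sinks" in the topological-graph sense, i.e.\ that $\OO(E_{d,c})$ is simple exactly when the underlying local homeomorphism is exact/minimal, and that the condition $c\notin d\Z$ is precisely what forces this: if $c\in d\Z$ the map is essentially a covering with a fixed circle and one loses the minimality, whereas if $c\notin d\Z$ the induced map on $\T$ is minimal and topologically free. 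Third, having established simplicity, I would invoke the fact (again from \cite{Kat}, and ultimately from Katsura's work on pure infiniteness of topological graph algebras) that $\OO(E_{d,c})$ is purely infinite because the graph has a loop with an entrance (the circle maps onto itself $d$-to-one with $d\ge 1$ and the twist by $c$ prevents it from being a trivial rotation), and nuclear and in the UCT class because all topological graph algebras are. Combining simple, purely infinite, nuclear, and UCT (plus separability, which is automatic) gives that $\QQ(\BS(c,d)^+)$ is a Kirchberg algebra.

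Concretely, the proof I would write is short: "By the preceding corollary, $\QQ(\BS(c,d)^+)\cong \OO(E_{d,c})$. By \cite[Example~A.6]{Kat}, when $c\notin d\Z$ the topological graph $E_{d,c}$ satisfies the hypotheses guaranteeing that $\OO(E_{d,c})$ is simple and purely infinite; it is also nuclear and satisfies the UCT since it is a topological graph algebra. Hence $\QQ(\BS(c,d)^+)$ is a Kirchberg algebra." The only real content is matching the combinatorial/number-theoretic condition $c\notin d\Z$ to the hypotheses stated in \cite[Example~A.6]{Kat}; one should make sure that reference indeed phrases its conclusion in terms of $c\notin d\Z$ (or $\gcd$-type conditions equivalent to it) rather than needing a separate translation.

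The main obstacle I anticipate is purely a bookkeeping/citation issue rather than a mathematical one: being certain that \cite[Example~A.6]{Kat} states exactly the conclusion "$\OO(E_{d,c})$ is a Kirchberg algebra when $c\notin d\Z$" (or gives simplicity and pure infiniteness from which it follows), so that the corollary really is immediate. If \cite{Kat} only gives, say, simplicity, one would additionally need to cite Katsura's pure-infiniteness criterion for topological graph algebras and verify the loop-with-entrance condition for $E_{d,c}$ by hand; this is routine but would lengthen the argument. Assuming the reference is as the authors indicate, the corollary is a one-line consequence and there is no genuine difficulty.
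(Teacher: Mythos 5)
Your proposal is correct and takes essentially the same route as the paper, whose proof is precisely the combination of Proposition \ref{prop: BS BQ} (via the identification $\QQ(\BS(c,d)^+)\cong\OO(E_{d,c})$) with the discussion in \cite[Example~A.6]{Kat}, which yields simplicity and pure infiniteness when $c\notin d\Z$, with separability, nuclearity and the UCT automatic for topological graph algebras. The extra dynamical details you sketch (minimality of the circle map, loop-with-entrance) are not needed for the citation-level argument, and some of their specifics are not quite how Katsura's $E_{d,c}$ is set up, but this does not affect the correctness of the approach.
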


\subsection{The semigroup $\nxn$}\label{subsec: n by n times C*}

Recall from \cite[Theorem~4.1]{lr} that the Toeplitz algebra $\TT(\nxn)$ is the universal $C^*$-algebra generated by an isometry $s$  and isometries $v_p$ for each prime $p$ satisfying
\begin{enumerate}
\item[(T1)] $v_ps=s^pv_p$; 
\item[(T2)] $v_pv_q=v_qv_p$;
\item[(T3)] $v_p^*v_q=v_qv_p^*$ for $p\not=q$;
\item[(T4)] $s^*v_p=s^{p-1}v_ps^*$; and
\item[(T5)] $v_p^*s^kv_p=0$ for all $1\le k<p$.
\end{enumerate}
The boundary quotient (in the sense of \cite{cl}) of $\TT(\nxn)$ is Cuntz's $\QQ_\N$ from \cite{c2}, and corresponds to adding the following relations:
\begin{enumerate}
\item[(Q5)] $\sum_{k=0}^{p-1}(s^kv_p)(s^kv_p)^*=1$ for all primes $p$; and
\item[(Q6)] $ss^*=1$.
\end{enumerate}

We saw in Section~\ref{subsec: n by n times} that $\nxn$ is the internal {\zsp} $U\bowtie A$, where
\[
U = \{(r,x) : x \in \N^\times, 0\le r \le x-1\}\quad\text{and}\quad A = \{(m,1) : m \in \N\}.
\]
Moreover, $U$ and $A$ satisfy the hypotheses of Theorem~\ref{thm: main c star theorem} and Theorem~\ref{thm: the boundary quotient for the bowtie}, so we can apply these theorems to $C^*(\nxn)$ and $\QQ(\nxn)$, respectively.

\begin{prop}\label{prop: main C* result for nxn}
There is an isomorphism $\phi:\TT(\nxn)\to C^*(\nxn)$ satisfying $\phi(s)=s_{(1,1)}$ and $\phi(v_p)=t_{(0,p)}$ for all primes $p$. The isomorphism $\phi$ descends to an isomorphism of $\QQ_\N$ onto $\QQ(\nxn)$. 
\end{prop}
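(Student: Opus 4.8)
The plan is to identify $C^*(\nxn)$ with the algebra $\AA$ of Theorem~\ref{thm: main c star theorem}, generated by an isometric representation $s$ of $A=\{(m,1):m\in\N\}\cong\N$ and a covariant representation $t$ of $U$ satisfying (K1), (K2), and then to build $\phi:\TT(\nxn)\to\AA$ and a candidate inverse $\psi$ separately, checking at the end that they undo one another on generators. (Alternatively the first statement follows by composing \cite{lr}'s identification of $\TT(\nxn)$ with Nica's $C^*(\qxq,\nxn)$ with Li's isomorphism $C^*(\nxn)\cong C^*(\qxq,\nxn)$ and tracking generators, but the direct route makes the map on generators transparent.)

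To get $\phi$ with $\phi(s)=s_{(1,1)}$ and $\phi(v_p)=t_{(0,p)}$, the universal property of $\TT(\nxn)$ reduces everything to checking (T1)--(T5) for these elements of $\AA$. The key observation is that, by \eqref{eq: n by n times a and r}, $(1,1)$ acts on $U$ by cyclically permuting $(0,p),(1,p),\dots,(p-1,p)$ with trivial restriction except at the last step; so (K1) gives $s_{(1,1)}t_{(k,p)}=t_{(k+1,p)}$ for $0\le k<p-1$ and $s_{(1,1)}t_{(p-1,p)}=t_{(0,p)}s_{(1,1)}$, hence $s_{(1,1)}^kt_{(0,p)}=t_{(k,p)}$ for $0\le k\le p-1$, which is (T1). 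Then (T4) drops out of (K2) (it computes $s_{(1,1)}^*t_{(0,p)}=t_{(p-1,p)}s_{(1,1)}^*$), (T5) from covariance of $t$ together with $(0,p)U\cap(k,p)U=\varnothing$ for $1\le k<p$ (Lemma~\ref{lem: U for n by n times is right lcm}), (T2) from $t_{(0,p)}t_{(0,q)}=t_{(0,pq)}=t_{(0,q)}t_{(0,p)}$, and (T3) from covariance of $t$ at $(0,p),(0,q)$, whose right LCM is $(0,pq)$.

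For the inverse I would set, inside $\TT(\nxn)$, $\sigma_{(m,1)}:=s^m$ and $\tau_{(r,x)}:=s^r v_x$, where $v_x:=v_{p_1}\cdots v_{p_k}$ for $x=p_1\cdots p_k$ a prime factorisation (well defined by (T2)), and verify that $\sigma$ is an isometric representation of $A$, that $\tau$ is a covariant representation of $U$, and that (K1), (K2) hold; the universal property of $\AA$ then gives $\psi:C^*(\nxn)\to\TT(\nxn)$, and since $\psi\circ\phi$ and $\phi\circ\psi$ fix the generators, $\phi$ is an isomorphism. Multiplicativity $\tau_{(r,x)}\tau_{(s,y)}=\tau_{(r+xs,xy)}$ uses only (T1) (to move $s^s$ past $v_x$, producing $s^{xs}$), and (K1), (K2) come from the same cyclic computation as above. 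The main obstacle is showing $\tau$ is \emph{covariant}: one must verify that $\tau_{(r,x)}^*\tau_{(s,y)}=v_x^*(s^*)^r s^s v_y$ equals $0$ when $(r+x\N)\cap(s+y\N)=\varnothing$ and equals $\tau_{(r_1,x_1)}\tau_{(r_2,y_1)}^*$ for the right LCM data of Lemma~\ref{lem: U for n by n times is right lcm} otherwise. This comes down to a divisibility-indexed reduction of $v_x^*s^\ell v_y$ prime by prime, resting on the elementary identities $v_p^*s^{pm}v_p=s^m$ (from (T1)) and $v_p^*s^kv_p=0$ for $1\le k<p$ (which is (T5)).

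For the boundary quotient, I would invoke Theorem~\ref{thm: the boundary quotient for the bowtie}: $\QQ(\nxn)$ is $\AA$ together with (Q1) $s_as_a^*=1$ and (Q2) $\prod_{u\in F}(1-t_ut_u^*)=0$ over foundation sets $F\subseteq U$. Under $\phi$, relation (Q6) $ss^*=1$ is (Q1) for the generator $(1,1)$ of $A$ (and this forces all of (Q1), since $s^m(s^m)^*=1$ once $ss^*=1$); and $\{(0,p),\dots,(p-1,p)\}$ is a foundation set in $U$ whose range projections $t_{(k,p)}t_{(k,p)}^*$ are mutually orthogonal (covariance), so (Q2) for it is precisely $\phi$ applied to (Q5) $\sum_{k=0}^{p-1}(s^kv_p)(s^kv_p)^*=1$. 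The remaining point, and the second place real work is needed, is that (Q2) for an \emph{arbitrary} foundation set $F=\{(r_i,x_i)\}\subseteq U$ follows from (Q5) and (Q6): with $N=\lcm\{x_i\}$, the foundation property forces each $(\rho,N)$ with $0\le\rho<N$ into some $(r_i,x_i)U$, so $\tau_{(\rho,N)}\tau_{(\rho,N)}^*\le\tau_{(r_i,x_i)}\tau_{(r_i,x_i)}^*$; since all of these range projections commute and $\sum_{\rho=0}^{N-1}(s^\rho v_N)(s^\rho v_N)^*=1$ in $\QQ_\N$ (an induction on the number of prime factors of $N$ from (Q5) and (T1)), multiplying $\prod_i(1-\tau_{(r_i,x_i)}\tau_{(r_i,x_i)}^*)$ by this decomposition of $1$ kills every term. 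The symmetric computation through $\psi$ gives equality of the relation ideals, so $\phi$ descends to the asserted isomorphism $\QQ_\N\cong\QQ(\nxn)$.
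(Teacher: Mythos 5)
Your proposal is correct, but it takes a noticeably more self-contained route than the paper. For the first statement the paper simply composes three known isomorphisms and tracks generators: the identification of the presentation (T1)--(T5) with the universal algebra for Nica covariant representations of $\nxn$ (cited from \cite[Page 652]{lr}), Li's isomorphism of that algebra with $C^*(\nxn)$ from \cite[Section~2.4]{Li2012}, and the isomorphism of Theorem~\ref{thm: main c star theorem}; you instead rebuild $\phi$ and an explicit inverse $\psi$ from universal properties, and the ``main obstacle'' you flag --- covariance of $\tau_{(r,x)}=s^rv_x$ --- is exactly the Nica-covariance computation that the citation of \cite{lr} outsources, so your route amounts to re-proving that part of \cite[Theorem~4.1]{lr} by the prime-by-prime reduction you sketch. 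For the quotient statement the paper checks only the direction you also check (that (Q1) and (Q2) force (Q5) and (Q6) for $s_{(1,1)}$ and $t_{(0,p)}$, using the foundation set $\{(0,p),\dots,(p-1,p)\}$ and \eqref{eq: 1 for C* nxn}), obtaining a homomorphism $\QQ_\N\to\QQ(\nxn)$, and then concludes injectivity from simplicity of $\QQ_\N$ \cite[Theorem 3.4]{c2} and surjectivity from $\phi(s^m)=s_{(m,1)}$, $\phi(s^rv_x)=t_{(r,x)}$; your lcm-refinement argument showing that (Q5) and (Q6) imply (Q2) for an \emph{arbitrary} foundation set (each $(\rho,N)$ with $N=\lcm\{x_i\}$ lies in some $(r_i,x_i)U$ because $x_i$ divides $N$ and the nonempty intersection forces $\rho\equiv r_i$ modulo $x_i$) is sound, and it is precisely the extra work the simplicity argument avoids. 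In short, your approach buys an explicit matching of the two relation ideals and independence from Cuntz's simplicity theorem, at the cost of redoing the computations of \cite{lr}; the paper's approach buys brevity by leaning on \cite{lr} and \cite{c2}.
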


\begin{proof}
The Toeplitz algebra $\TT(\nxn)$ can be viewed as the universal $C^*$-algebra generated by a Nica covariant representation $V$ of $\nxn$. This description coincides with the presentation (T1)--(T5) via $s\mapsto V_{(1,1)}$ and each $v_p\mapsto V_{(0,p)}$ \cite[Page 652]{lr}. The isomorphism $\TT(\nxn)\to C^*(\nxn)$ from Li's argument in \cite[Section~2.4]{Li2012} sends $V_{(1,1)}\mapsto v_{(1,1)}$ and each $V_{(0,p)}\mapsto v_{(0,p)}$. The isomorphism of Theorem~\ref{thm: main c star theorem} sends $v_{(1,1)}\mapsto s_{(1,1)}$ and each $v_{(0,p)}\mapsto t_{(0,p)}$. We define $\phi:\TT(\nxn)\to C^*(\nxn)$ to be the composition of these isomorphisms.

We now claim that if $s_{(1,1)}$ and $\{t_{(0,p)}:p\text{ prime}\}$ satisfy (Q1) and (Q2) of Theorem~\ref{thm: the boundary quotient for the bowtie}, then they satisfy (Q5) and (Q6). 

For $p$ a prime and $0\le k\le p-1$ we have $(k,1)\cdot (0,p)=(k,p)$ and $(k,1)|_{(0,p)}=(0,1)$. Using (K1) of Theorem~\ref{thm: main c star theorem} we then get 
\begin{equation}\label{eq: 1 for C* nxn}
s_{(k,1)}t_{(0,p)}=t_{(k,p)}.
\end{equation}
Since the sets $\{(k,p)\nxn:0\le k\le p-1\}$ are mutually disjoint and $t$ is covariant, we have $t_{(j,p)}t_{(j,p)}^*t_{(k,p)}t_{(k,p)}^*=0$ for $0\le j,k\le p-1$ with $j\not= k$. Hence
\[
\prod_{k=0}^{p-1}(1-t_{(k,p)}t_{(k,p)}^*)=1-\sum_{k=0}^{p-1}t_{(k,p)}t_{(k,p)}^*.
\]
Since $\{(0,p),\dots,(p-1,p)\}$ is a foundation set in $U$, it follows from (Q2) that
\begin{equation}\label{eq: 2 for C* nxn}
\sum_{k=0}^{p-1}t_{(k,p)}t_{(k,p)}^*=1.
\end{equation}
It now follows from Equations~\eqref{eq: 1 for C* nxn} and \eqref{eq: 2 for C* nxn} that
\[
\sum_{k=0}^{p-1}s_{(1,1)}^kt_{(0,p)}(s_{(1,1)}^kt_{(0,p)})^*=\sum_{k=0}^{p-1}(s_{(k,1)}t_{(0,p)})(s_{(k,1)}t_{(0,p)})^*=\sum_{k=0}^{p-1}t_{(k,p)}t_{(k,p)}^*=1,
\]
and hence (Q5) is satisfied. We know from (Q2) that $s_{(1,1)}s_{(1,1)}^*=1$, and hence (Q6) is satisfied. So $\phi$ descends to a homomorphism $\phi:\QQ_\N\to \QQ(\nxn)$, which is injective because $\QQ_\N$ is simple \cite[Theorem 3.4]{c2}. For each $(r,x)\in U$ and $(m,1)\in A$ we have 
\[
\phi(s^m)=s_{(m,1)}\quad\text{and}\quad\phi(s^rv_x)=t_{(r,x)},
\]
and so each generator of $\QQ(\nxn)$ is in the range of $\phi$. Hence $\phi:\QQ_\N\to\QQ(\nxn)$ is surjective, and so is an isomorphism.
\end{proof}

\begin{remark}\label{rem: other quotients for n by n times}
The multiplicative and boundary quotients of $\TT(\nxn)$ are studied in \cite{BaHLR}. The multiplicative boundary quotient $\TT_{\mult}(\nxn)$ corresponds to adding relation (Q5) to the presentation of $\TT(\nxn)$, and the additive boundary quotient $\TT_{\add}(\nxn)$ corresponds to adding relation (Q6). It follows from Proposition~\ref{prop: main C* result for nxn} that $\TT_{\mult}(\nxn)\cong C^*_U(U\bowtie A)$ and $\TT_{\add}(\nxn)\cong C^*_A(U\bowtie A)$.
\end{remark}

\subsection{The semigroup $\zxz$}\label{subsec: zxz C*}

In \cite{c2} Cuntz also introduced the $C^*$-algebra $\QQ_\Z$, which instigated work on the $C^*$-algebras of more general integral domains \cite{cLi}.  
Recall that $\QQ_\Z$ can be viewed as the universal $C^*$-algebra generated by a unitary $s$ and isometries $\{v_a:a\in\Z^\times\}$ satisfying
\begin{enumerate}
\item[(i)] $v_av_b=v_{ab}$ for all $a,b\in\Z^\times$;
\item[(ii)] $v_as=s^av_a$ and $v_as^*={s^*}^av_a$ for all $a\in\Z^\times$; and 
\item[(iii)] $\sum_{j=0}^{|a|-1}s^jv_av_a^*{s^*}^{j}=1$ for all $a\in\Z^\times$.
\end{enumerate}

Recall from Section~\ref{subsec: n by n times} that $\zxz$ is the internal {\zsp} $U\bowtie A$, where
\[
U = \{(r,x) : x >0, 0\le r \le x-1\}\quad\text{and}\quad A = \Z\times\{1,-1\}.
\]
Moreover, $U$ and $A$ satisfy the hypotheses of Theorem~\ref{thm: the boundary quotient for the bowtie}, so we can use the presentation of $\QQ(\zxz)$ from Theorem~\ref{thm: the boundary quotient for the bowtie}. 

\begin{prop}\label{prop: main C* result for zxz}
There is an isomorphism $\phi:\QQ_\Z\to \QQ(\zxz)$ satisfying $\phi(s)=s_{(1,1)}$ and 
\[
\phi(v_a)=s_{(0,a/|a|)}t_{(0,|a|)}\text{ for all $a\in\Z^\times$}.
\]
\end{prop}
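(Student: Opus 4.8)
The plan is to follow the same strategy as in the proof of Proposition~\ref{prop: main C* result for nxn}: exhibit generators in $\QQ(\zxz)$ satisfying the relations (i)--(iii) of $\QQ_\Z$, invoke a universal property to get a homomorphism $\phi\colon\QQ_\Z\to\QQ(\zxz)$, then argue injectivity via simplicity of $\QQ_\Z$ and surjectivity by checking the generators of $\QQ(\zxz)$ lie in the range. Throughout we use the presentation of $\QQ(\zxz)$ from Theorem~\ref{thm: the boundary quotient for the bowtie}, with the isometric representation $s$ of $A=\Z\times\{1,-1\}$ and the covariant representation $t$ of $U=\{(r,x):x\ge1,\,0\le r<x\}$ satisfying (K1), (K2), (Q1), (Q2).

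First I would record the elementary consequences of the action and restriction formulas from Section~\ref{subsec: the semigroup zxz}. Setting $s:=s_{(1,1)}$, note $s$ is unitary by (Q1) since $(1,1)$ is invertible in $A$, and $s_{(0,-1)}$ is unitary too. For $a\in\Z^\times$ put $\phi(v_a):=s_{(0,a/|a|)}t_{(0,|a|)}$. I would verify (i) using (K1): the key computation is $t_{(0,|a|)}s_{(0,b/|b|)} = s_{(0,b/|b|)}t_{(0,|a|)}$-type intertwining coming from $(0,j)\cdot(0,x)=(0,x)$ and $(0,j)|_{(0,x)}=(0,j)$, together with multiplicativity of $t$ on $U$ (the generator $(0,|a|)$ behaves like $|a|\mapsto |a|$ under concatenation in the relevant subsemigroup). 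For (ii), the relation $(m,1)\cdot(0,x)=(0,x)$ with $(m,1)|_{(0,x)}=((m - (m\,(\Mod x)))/x,1)$ at $m$ a multiple of $x$ gives $s_{(1,1)}^x t_{(0,x)} = t_{(x,x)}\cdots$; more directly, using (K1) repeatedly one gets $s^{a} \phi(v_a) = \phi(v_a) s$ after absorbing the sign via $s_{(0,-1)}$ — I would unwind this carefully as the sign interacts with the $ax+b$ action $j(r-s)$. The adjoint relation $v_a s^* = (s^*)^a v_a$ follows by taking adjoints and using that $s$ is unitary. For (iii), I would show $\{(0,|a|),(1,|a|),\dots,(|a|-1,|a|)\}$ is a foundation set in $U$, observe these $|a|$ principal ideals are mutually disjoint so the product in (Q2) telescopes to $1 - \sum_{k=0}^{|a|-1} t_{(k,|a|)}t_{(k,|a|)}^* = 0$, and finally identify $s^k \phi(v_a)\phi(v_a)^*(s^*)^k$ with $t_{(k,|a|)}t_{(k,|a|)}^*$ via a formula like $s_{(1,1)}^k t_{(0,|a|)} = t_{(k,|a|)} s_{(\ast,1)}$ from (K1) (the analogue of Equation~\eqref{eq: 1 for C* nxn}), the extra $s$-factor being unitary so it disappears on forming the range projection.

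With the homomorphism $\phi\colon\QQ_\Z\to\QQ(\zxz)$ in hand, injectivity is immediate because $\QQ_\Z$ is simple (this is part of Cuntz's results in \cite{c2}, cited analogously to \cite[Theorem~3.4]{c2} for $\QQ_\N$). For surjectivity I would check that every generator of $\QQ(\zxz)$ lies in $\operatorname{range}\phi$: one has $\phi(s^m)=s_{(m,1)}$ for $m\in\Z$, $\phi(v_{-1})=s_{(0,-1)}$, and for $(r,x)\in U$ one recovers $t_{(r,x)} = s_{(r,1)} t_{(0,x)}\cdot(\text{unit}) = \phi(s^r v_x)$ up to invertible factors — so the generating isometric representation $s$ of $A$ and covariant representation $t$ of $U$ are all hit. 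Hence $\phi$ is an isomorphism.

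The main obstacle I expect is bookkeeping with the sign component $j\in\{1,-1\}$ in the $ax+b$ setting: unlike the purely positive case of $\nxn$, the action is $(m,j)\cdot(r,x)=((m+jr)\,(\Mod x),x)$, so the intertwining relations (K1)/(K2) pick up the sign in ways that must be reconciled with Cuntz's relation $v_a s^* = (s^*)^{a} v_a$ and with the sign of $a$ in (iii). Concretely, one must be careful that $\phi(v_a)$ for negative $a$ genuinely satisfies $s^a\phi(v_a)=\phi(v_a)s$ where $s^a$ for $a<0$ means $(s^*)^{|a|}$, and that the foundation-set argument for (iii) is insensitive to the sign. I anticipate no conceptual difficulty beyond this — the skeleton is identical to Proposition~\ref{prop: main C* result for nxn} — but the verification of relation (ii) for negative $a$ and the precise form of the auxiliary identity replacing \eqref{eq: 1 for C* nxn} will require the most care.
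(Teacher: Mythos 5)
Your proposal is correct and follows essentially the same route as the paper's proof: the same generators $S=s_{(1,1)}$ and $V_a=s_{(0,a/|a|)}t_{(0,|a|)}$, verification of Cuntz's relations (i)--(iii) via (K1), (Q1) and the foundation set $\{(0,|a|),\dots,(|a|-1,|a|)\}$ with (Q2), then the universal property, injectivity from simplicity of $\QQ_\Z$, and surjectivity from $\phi(s^m)=s_{(m,1)}$, $\phi(v_{-1})=s_{(0,-1)}$ and $\phi(s^rv_x)=t_{(r,x)}$ (which in fact holds exactly, not just up to invertible factors). The only cosmetic difference is that you deduce $v_as^*=(s^*)^av_a$ from $v_as=s^av_a$ and unitarity of $s$, whereas the paper verifies it directly with (K1) using $(-|a|,1)\cdot(0,|a|)=(0,|a|)$ and $(-|a|,1)|_{(0,|a|)}=(-1,1)$; both are fine.
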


\begin{proof}
Let $S$ denote the unitary $s_{(1,1)}$ and let $V_a$ denote the isometry $s_{(0,a/|a|)}t_{(0,|a|)}$ for each $a\in \Z^\times$. We claim that (i)--(iii) are satisfied. First note that for each $j\in\{1,-1\}$ and $a\in\Z^\times$ we have $(0,j)\cdot(0,|a|)=(0,|a|)$ and $(0,j)|_{(0,|a|)}=(0,j)$. (Of course, $(0,1)$ is the identity in $\zxz$, so these identities trivially hold when $j=1$.) Hence by (K1) we have
\[
s_{(0,j)}t_{(0,|a|)}=t_{(0,j)\cdot(0,|a|)}s_{(0,j)|_{(0,|a|)}}=t_{(0,|a|)}s_{(0,j)},
\]
for all $a\in\Z^\times$. Now
\[
V_aV_b=s_{(0,a/|a|)}t_{(0,|a|)}s_{(0,b/|b|)}t_{(0,|b|)}=s_{(0,a/|a|)}s_{(0,b/|b|)}t_{(0,|a|)}t_{(0,|b|)}=s_{(0,ab/|ab|)}t_{(0,|ab|)}=V_{ab},
\]
which is (i). For the first part of (ii), first note that $(|a|,1)\cdot(0,|a|)=(0,|a|)$ and $(|a|,1)|_{(0,|a|)}=(1,1)$ for all $a\in\Z^\times$. Hence by (K1) we have 
\begin{align*}
V_aS=s_{(0,a/|a|)}t_{(0,|a|)}s_{(1,1)} &= s_{(0,a/|a|)}t_{(|a|,1)\cdot(0,|a|)}s_{(|a|,1)|_{(0,|a|)}} \\
&= s_{(0,a/|a|)}s_{(|a|,1)}t_{(0,|a|)}\\
&= s_{(a,a/|a|)}t_{(0,|a|)}\\
&= s_{(1,1)}^as_{(0,a/|a|)}t_{(0,|a|)}\\
&=S^aV_a.
\end{align*} 
Similarly we have $(-|a|,1)\cdot(0,|a|)=(0,|a|)$ $(-|a|,1)|_{(0,|a|)}=(-1,1)$ for all $a\in\Z^\times$, and hence
\begin{align*}
V_aS^*=s_{(0,a/|a|)}t_{(0,|a|)}s_{(-1,1)} &= s_{(0,a/|a|)}t_{(-|a|,1)\cdot(0,|a|)}s_{(-|a|,1)|_{(0,|a|)}} \\
&= s_{(0,a/|a|)}s_{(-|a|,1)}t_{(0,|a|)}\\
&= s_{(-a,a/|a|)}t_{(0,|a|)}\\
&= s_{(-1,1)}^as_{(0,a/|a|)}t_{(0,|a|)}\\
&={S^*}^aV_a.
\end{align*} 
So (ii) holds. For (iii), we first calculate 
\begin{align*}
\sum_{j=0}^{|a|-1}S^jV_aV_a^*{S^*}^j &= \sum_{j=0}^{|a|-1}s_{(1,1)}^js_{(0,a/|a|)}t_{(0,|a|)}t_{(0,|a|)}^*s_{(0,a/|a|)}^*{s_{(1,1)}^*}^j\\
&=  \sum_{j=0}^{|a|-1}s_{(j,a/|a|)}t_{(0,|a|)}t_{(0,|a|)}^*s_{(j,a/|a|)}^*.
\end{align*}
Now, (K1) gives
\[
s_{(j,a/|a|)}t_{(0,|a|)}=t_{(j,a/|a|)\cdot(0,|a|)}s_{(j,a/|a|)|_{(0,|a|)}}=t_{(j,|a|)}s_{(0,a/|a|)}.
\]
Hence 
\[
\sum_{j=0}^{|a|-1}S^jV_aV_a^*{S^*}^j =\sum_{j=0}^{|a|-1}t_{(j,|a|)}s_{(0,a/|a|)}s_{(0,a/|a|)}^*t_{(j,|a|)}^*=\sum_{j=0}^{|a|-1}t_{(j,|a|)}t_{(j,|a|)}^*,
\]
where the last equality holds because $s_{(0,a/|a|)}$ is a unitary. Since $\{(0,a),\dots, (a-1,a)\}$ is a foundation set, and the corresponding principal ideals are mutually disjoint, condition (Q2) gives
\[
\sum_{j=0}^{|a|-1}S^jV_aV_a^*{S^*}^j =\sum_{j=0}^{|a|-1}t_{(j,|a|)}t_{(j,|a|)}^*=1.
\]
So (iii) holds. We then get a homomorphism $\phi:\QQ_\Z\to \QQ(\zxz)$ satisfying $\phi(s)=s_{(1,1)}$ and $\phi(v_a)=s_{(0,a/|a|)}t_{(0,|a|)}$ for all $a\in\Z^\times$. Since $\QQ_\Z$ is simple, we know that $\phi$ is injective. For each $(r,x)\in U$ and $(m,j)\in A$ ($j\in\{1,-1\}$), we have
\[
\phi(s^rv_x)=t_{(r,x)}\quad\text{and}\quad \phi(s^m)=s_{(m,1)}.
\]
Since each generator is in the range of $\phi$, it follows that $\phi$ is the desired isomorphism.%\Homer
\end{proof}

\subsection{Self-similar actions}\label{subsec: self-similar actions C*}

Let $(G,X)$ be a self-similar action as described in Section~\ref{subsec: self-similar actions}. Recall from \cite[Proposition 3.2]{lrrw} that the Toeplitz algebra $\TT(G,X)$ is the universal $C^*$-algebra generated by a Toeplitz-Cuntz family of isometries $\{v_x \mid x \in X\}$ and a unitary representation $u$ of $G$ satisfying
\begin{equation}\label{SS equation}
u_g v_x = v_{g\cdot x} u_{g|_{x}},\quad\text{for all $g\in G$ and $x\in X$}.
\end{equation}
Recall from \cite[Definition 3.1]{N2009} (see also \cite[Corollary 3.5]{lrrw}) that the Cuntz-Pimsner algebra $\OO(G,X)$ is the quotient of $\TT(G,X)$ by the ideal $I$ generated by $1-\sum_{x\in X}v_xv_x^*$.

In Section~\ref{subsec: self-similar actions} we saw that each self-similar action $(G,X)$ gives rise to a \ZS product $X^*\bowtie G$. We now show that $C^*(X^* \bowtie G)$ is isomorphic to $\TT(G,X)$, and that the boundary quotient $\QQ(X^* \bowtie G)$ is isomorphic to $\OO(G,X)$.

\begin{thm}
Let $(G,X)$ be a self-similar group. There is an isomorphism $\phi:\TT(G,X) \to C^*(X^* \bowtie G)$ such that $\phi(u_g)=s_g$ and $\phi(v_x)=t_x$. Moreover, $\phi$ descends to an isomorphism of $\OO(G,X)$ onto $\QQ(X^* \bowtie G)$.
\end{thm}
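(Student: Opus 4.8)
The plan is to apply Theorem~\ref{thm: main c star theorem} and Theorem~\ref{thm: the boundary quotient for the bowtie} to the \zsp~$X^*\bowtie G$, and then match the resulting presentations with those of $\TT(G,X)$ and $\OO(G,X)$. By the theorem of Lawson quoted above, the pair $X^*$ and $G$ satisfies the hypotheses of Lemma~\ref{Davs delight}: $X^*$ is a free semigroup, hence right LCM; $G$ is a group, so $\JJ(G)=\{G\}$ is trivially totally ordered by inclusion; and the action of each $g\in G$ on $X^*$ is bijective by Lemma~\ref{lem:props}(2). So Theorem~\ref{thm: main c star theorem} applies and gives that $C^*(X^*\bowtie G)$ is universal for an isometric representation $s$ of $G$ together with a covariant representation $t$ of $X^*$ satisfying (K1) and (K2).

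First I would unpack these conditions for $X^*\bowtie G$. Since $G$ is a group, the isometric representation $s$ is automatically a unitary representation: $s_g s_g^*=s_g s_{g\inv} s_{g\inv}^* s_g = \ldots$, or more directly $s_g^* = s_{g\inv}$ by the argument in the proof of Lemma~\ref{lem: well defined covariance}. Covariance of $t$ on the free semigroup $X^*$ says exactly that $\{t_x\}_{x\in X}$ is a Toeplitz-Cuntz family (the relations $t_x^*t_y=\delta_{x,y}$ for $x,y\in X$), since distinct letters have no common right multiple and $xX^*\cap xX^*=xX^*$. Condition (K1) reads $s_g t_x = t_{g\cdot x} s_{g|_x}$, which is precisely Equation~\eqref{SS equation}. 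Condition (K2) says $s_g^* t_x = t_z s_{g|_z}^*$ where $g\cdot z=x$, i.e. $z=g\inv\cdot x$; using $s_g^*=s_{g\inv}$ and $g|_z\inv = g\inv|_{g\cdot z}=g\inv|_x$ from Lemma~\ref{lem:props}(1), this becomes $s_{g\inv}t_x = t_{g\inv\cdot x}s_{g\inv|_x}$, which is just (K1) applied to $g\inv$. Hence for $X^*\bowtie G$ condition (K2) is redundant, and $C^*(X^*\bowtie G)$ is universal for a unitary representation of $G$ and a Toeplitz-Cuntz family satisfying \eqref{SS equation} --- exactly the presentation of $\TT(G,X)$ from \cite[Proposition~3.2]{lrrw}. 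The universal properties on both sides then yield mutually inverse homomorphisms $\phi:\TT(G,X)\to C^*(X^*\bowtie G)$ and its inverse, with $\phi(u_g)=s_g$, $\phi(v_x)=t_x$; checking they invert each other on generators is routine.

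For the boundary quotient, I would invoke Theorem~\ref{thm: the boundary quotient for the bowtie}: $\QQ(X^*\bowtie G)$ is universal for the same data satisfying additionally (Q1) $s_g s_g^*=1$ for all $g\in G$, and (Q2) $\prod_{w\in F}(1-t_w t_w^*)=0$ for every foundation set $F\subseteq X^*$. Since $s$ is already unitary, (Q1) is automatic and carries no content. The key point is that in the free semigroup $X^*$, the set $X$ of letters is a foundation set, and $\{wX^*:w\in X\}$ are pairwise disjoint, so $\prod_{x\in X}(1-t_xt_x^*)=1-\sum_{x\in X}t_xt_x^*$; moreover any foundation set $F$ refines to one whose associated ideals cover in a way that makes the general (Q2) relation a consequence of the single relation $\sum_{x\in X}t_xt_x^*=1$ (a standard fact about Cuntz-type relations on free semigroups, which I would state and briefly justify by noting every $w\in X^*$ has a prefix in $X$ and using the Cuntz relation to telescope). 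Thus (Q2) is equivalent to $1-\sum_{x\in X}v_xv_x^*=0$, and $\QQ(X^*\bowtie G)$ is the quotient of $\TT(G,X)$ by the ideal generated by $1-\sum_{x\in X}v_xv_x^*$, which is $\OO(G,X)$ by \cite[Definition~3.1]{N2009}. So $\phi$ descends to the asserted isomorphism $\OO(G,X)\to\QQ(X^*\bowtie G)$.

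The main obstacle I anticipate is the bookkeeping in reducing the full family of (Q2) relations over arbitrary foundation sets $F\subseteq X^*$ to the single Cuntz relation on the letters $X$; one must argue that once $\sum_{x\in X}t_xt_x^*=1$ holds, the projections $t_wt_w^*$ for a general foundation set $F$ sum (allowing for nestings $wX^*\supseteq w'X^*$) to $1$, so that $\prod_{w\in F}(1-t_wt_w^*)=0$. This is genuinely a statement about the combinatorics of prefixes in $X^*$ together with the fact that $t_wt_w^*t_{w'}t_{w'}^*$ is either $0$, $t_wt_w^*$, or $t_{w'}t_{w'}^*$ according to comparability of $w,w'$; it is routine but needs to be spelled out. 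Everything else is a direct translation through the two theorems already proved.
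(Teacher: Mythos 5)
Your proposal is correct and follows essentially the same route as the paper: both parts translate the presentations of Theorems~\ref{thm: main c star theorem} and~\ref{thm: the boundary quotient for the bowtie} into the self-similar picture, observing that $s$ is automatically unitary, that covariance of $t$ is the Toeplitz--Cuntz condition, that (K1) is \eqref{SS equation} with (K2) redundant, and that (Q2) reduces to the single Cuntz relation because the foundation property forces every word of length $N=\max\{|w|:w\in F\}$ to have a prefix in $F$, after which one telescopes $1-\sum_{z\in X^N}t_zt_z^*$ into the ideal generated by $1-\sum_{x\in X}t_xt_x^*$. The combinatorial step you flag as needing to be spelled out is exactly the argument the paper carries out (the refinement $F'=X^N$ and the induction on word length), so your outline is complete in substance.
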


\begin{proof}
We know that $s$ is a unitary representation of $G$ in $C^*(X^*\bowtie G)$. The covariance of $t$ is equivalent to insisting that $t_x^*t_y=\delta_{x,y}$; that is, $\{t_x:x\in X\}$ is a Toeplitz-Cuntz family. Condition (K1) is equivalent to insisting that $s_g t_x = t_{g\cdot x} s_{g|_{x}}$ for all $g\in G$ and letters $x\in X$; to replace $x$ in this equation with a word $w\in X^*$ we use (B5) and (B6) of Definition~\ref{def: the external bowtie}. Condition (K2) comes for free from (K1): for $w,z\in X^*$ and $g\in G$ with $g\cdot z=w$ we have $s_gt_z=t_{g\cdot z}s_{g|_z}=t_ws_{g|_z}$, and then
\[
t_ws_{g|_z}=s_gt_z \Longleftrightarrow s_g^*t_ws_{g|_z}s_{g|_z}^*=s_g^*s_gt_zs_{g|_z}^* \Longleftrightarrow s_g^*t_w=t_zs_{g|_z}^*.
\]
The above arguments imply that $C^*(X^*\bowtie G)$ is the universal $C^*$-algebra generated by a Toeplitz-Cuntz family $\{t_x:x\in X\}$ and a unitary representation $s$ of $G$ satisfying $s_g t_x = t_{g\cdot x} s_{g|_{x}}$ for all $g\in G$ and $x\in X$. It is now evident that we have the desired isomorphism $\phi:\TT(G,X) \to C^*(X^* \bowtie G)$.

It remains to show that $\phi$ descends to an isomorphism $\phi:\OO(G,X) \to \QQ(X^* \bowtie G)$. Denote by $I$ the ideal in $\TT(G,X)$ generated by $1-\sum_{x\in X} v_xv_x^*$, and by $J$ the ideal in $C^*(X^*\bowtie G)$ generated by the set $\{\prod_{w\in F}(1-t_wt_w^*):F\text{ is a foundation set in }X^*\}$. So $\OO(G,X)$ is $\TT(G,X)/I$ and $\QQ(X^*\bowtie G)$ is $C^*(X^*\bowtie G)/J$. We need to show that $\phi(I)$, which is the ideal in $C^*(X^*\bowtie G)$ generated by $1-\sum_{x\in X}t_xt_x^*$, is equal to $J$.

Since $X$ is a foundation set in $X^*$, we have $\phi(I)\subset J$. To get the reverse containment we fix a foundation set $F$; it suffices to prove that $\prod_{w\in F}(1-t_wt_w^*)\in \phi(I)$. Denote
\[
N:=\max\{|w|:w\in F\}\quad\text{and}\quad F':=\bigcup_{w\in F}\{ww':w'\in X^{N-|w|}\}.
\]
We now claim that $F'=X^N$. For the sake of contradiction, suppose $F' \neq X^N$ and let $z \in X^N \setminus F'$. By definition of $F'$, it follows that there is no $w \in F$ such that $z=ww'$. This means $wX^* \cap zX^*= \varnothing$ for all $w\in F$, which contradicts that $F$ is a foundation set. So we must have $F'=X^N$. For each $w\in F$ and $ww'\in X^N$ we have $1-t_wt_w^*\le 1-t_{ww'}t_{ww'}^*$. Since the projections $\{1-t_zt_z^*: z\in X^N\}$ commute, we then have
\[
\prod_{w\in F}(1-t_wt_w^*)= \prod_{w\in F}(1-t_wt_w^*)\prod_{z\in X^N}(1-t_zt_z^*).
\]
The result will now follow if $\prod_{z\in X^N}(1-t_zt_z^*)=1-\sum_{z\in X^N}t_zt_z^*\in\phi(I)$. We show by induction that $1-\sum_{z\in X^n}t_zt_z^*\in\phi(I)$ for all $n\ge 1$. The result is true for $n=1$. Assume true for $n$. Then
\begin{align*}
1-\sum_{w\in X^{n+1}}t_wt_w^* &= 1-\sum_{x\in X}\sum_{z\in X^n}t_{xz}t_{xz}^*\\
&=1-\sum_{x\in X}t_x\big(\sum_{z\in X^n}t_zt_z^*\big)t_x^*\\
&= 1+\sum_{x\in X}t_x\big(\big(1-\sum_{z\in X^n}t_zt_z^*\big) -1\big)t_x^*\\
&= \big(1-\sum_{x\in X}t_xt_x^*\big) + \sum_{x\in X}t_x\big(1-\sum_{z\in X^n}t_zt_z^*\big)t_x^*\\
&\in \phi(I)
\end{align*}
So $\prod_{w\in F}(1-t_wt_w^*)\in\phi(I)$, and we have $J\subset \phi(I)$. Hence $\phi(I)=J$, and $\phi$ descends to an isomorphism of $\OO(G,X)$ onto $\QQ(X^*\bowtie G)$.
\end{proof}

\subsection{The binary adding machine}\label{subsec: the adding machine C star C*}

The 2-adic ring $C^*$-algebra of the integers $\QQ_2$ was introduced and studied in \cite{LL}. Recall that $\QQ_2$ is simple and purely infinite, and is the universal $C^*$-algebra generated by a unitary $u$ and an isometry $s_2$ satisfying 
\begin{enumerate}
\item[(I)] $s_2u=u^2s_2$; and
\item[(II)] $s_2s_2^*+us_2s_2^*u^*=1$.
\end{enumerate}

Consider the \ZS product $X^*\bowtie\N$ described in Section~\ref{subsec: the adding machine}, where $X$ is the alphabet $\{0,1\}$, and $\N=\langle e,\gamma\rangle$. It follows from our identification of $X^*\bowtie \N$ as $\BS(1,2)^+$ that $C^*(X^*\bowtie \N)$ is isomorphic to Nica's $C^*(\BS(1,2),\BS(1,2)^+)$. The quotient $C^*_\N(X^*\bowtie \N)$ (in the sense of Remark~\ref{rem: all the boundary quotients}) is isomorphic to the Cuntz-Pimsner algebra $\OO(\Z,X)$ described in Section \ref{subsec: self-similar actions C*}. We also have the following description of the boundary quotient:

\begin{prop}\label{prop: Q 2 C* result}
There is an isomorphism $\phi:\QQ_2\to\QQ(X^*\bowtie \N)$ such that $\phi(u)=s_\gamma$ and $\phi(s_2)=t_0$.
\end{prop}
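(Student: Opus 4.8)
The plan is to specialise Theorem~\ref{thm: the boundary quotient for the bowtie} to the \zsp $X^*\bowtie\N$ of Section~\ref{subsec: the adding machine}, read off a concrete generators-and-relations presentation of $\QQ(X^*\bowtie\N)$, and then show it coincides with the presentation of $\QQ_2$ under $u\leftrightarrow s_\gamma$ and $s_2\leftrightarrow t_0$ by building homomorphisms both ways. First I would unpack the presentation. Here $X=\{0,1\}$ and $\N=\langle e,\gamma\rangle$, so by Theorem~\ref{thm: the boundary quotient for the bowtie} the algebra $\QQ(X^*\bowtie\N)$ is generated by a single isometry $s_\gamma$ (which generates the isometric representation $s$ of $\N$) together with a covariant representation $t$ of the free semigroup $X^*$, i.e.\ two isometries $t_0,t_1$ with $t_0^*t_1=0$. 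Relation (Q1) makes $s_\gamma$ unitary; since $U=X^*$ is free on two generators, the family of foundation-set relations (Q2) collapses, exactly as in the proof of Proposition~\ref{prop: BS BQ}, to the single Cuntz relation $t_0t_0^*+t_1t_1^*=1$. Using the adding-machine action and restriction ($\gamma\cdot 0=1$, $\gamma\cdot 1=0$, $\gamma|_0=e$, $\gamma|_1=\gamma$), condition (K1) becomes $s_\gamma t_0=t_1$ and $s_\gamma t_1=t_0 s_\gamma$, while (K2) is automatic because $s_\gamma$ is unitary. Thus $\QQ(X^*\bowtie\N)$ is universal for a unitary $s_\gamma$ and isometries $t_0,t_1$ with $t_0^*t_1=0$, $t_0t_0^*+t_1t_1^*=1$, $s_\gamma t_0=t_1$ and $s_\gamma t_1=t_0 s_\gamma$.

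Next I would construct $\phi$. In $\QQ(X^*\bowtie\N)$ the pair $s_\gamma,t_0$ satisfies (I) since $s_\gamma^2 t_0=s_\gamma t_1=t_0 s_\gamma$, and it satisfies (II) since $t_0t_0^*+s_\gamma t_0t_0^*s_\gamma^*=(s_\gamma t_0)(s_\gamma t_0)^*+t_0t_0^*=t_1t_1^*+t_0t_0^*=1$. The universal property of $\QQ_2$ then produces a homomorphism $\phi\colon\QQ_2\to\QQ(X^*\bowtie\N)$ with $\phi(u)=s_\gamma$ and $\phi(s_2)=t_0$ (so $\phi(us_2)=s_\gamma t_0=t_1$). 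For the reverse map, in $\QQ_2$ put $\tilde t_0:=s_2$ and $\tilde t_1:=us_2$; these are isometries, $u$ is unitary, and conjugating (II) by $s_2$ gives $s_2^*us_2s_2^*u^*s_2=0$, i.e.\ $\tilde t_0^*\tilde t_1=0$, so $w\mapsto\tilde t_w$ is a covariant representation of $X^*$ by the standard argument for Cuntz families. Moreover $\tilde t_0\tilde t_0^*+\tilde t_1\tilde t_1^*=s_2s_2^*+us_2s_2^*u^*=1$ is both the Cuntz relation and (Q2), $uu^*=1$ is (Q1), and $u\tilde t_0=\tilde t_1$ together with $u\tilde t_1=u^2s_2=s_2u=\tilde t_0 u$ (using (I)) give (K1), with (K2) automatic. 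Theorem~\ref{thm: the boundary quotient for the bowtie} then yields $\psi\colon\QQ(X^*\bowtie\N)\to\QQ_2$ with $\psi(s_\gamma)=u$, $\psi(t_0)=s_2$, $\psi(t_1)=us_2$. Since $\psi\circ\phi$ fixes the generators $u,s_2$ and $\phi\circ\psi$ fixes $s_\gamma,t_0,t_1$, the two maps are mutually inverse, which proves the claim. (One could instead obtain injectivity of $\phi$ from simplicity of $\QQ_2$, using that $\QQ(X^*\bowtie\N)\cong\QQ(\BS(1,2)^+)$ is nonzero by Corollary~\ref{cor: BS and Kirchberg}, and surjectivity because $s_\gamma,t_0,t_1$ all lie in the range of $\phi$.)

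There is no deep obstacle; the two points that take a moment of care are (i) checking that the foundation-set relations (Q2) over all finite $F\subseteq X^*$ reduce to the single relation $t_0t_0^*+t_1t_1^*=1$ (the padding-to-length-$N$ argument, as in the self-similar actions and Baumslag--Solitar cases above), and (ii) the observation that relation (II) of $\QQ_2$ already forces $s_2^*us_2=0$, which is exactly what is needed for $\{s_2,us_2\}$ to define a \emph{covariant} rather than merely isometric representation of $X^*$.
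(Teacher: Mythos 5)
Your proposal is correct, and its first half (checking relations (I) and (II) for $s_\gamma$ and $t_0$ via (K1), (Q1), (Q2), then invoking the universal property of $\QQ_2$ to get $\phi$) is exactly what the paper does. Where you diverge is in proving that $\phi$ is an isomorphism: the paper gets injectivity from the simplicity of $\QQ_2$ (a result imported from \cite{LL}) and surjectivity by observing that $s_\gamma$, $t_0$, $t_1=s_\gamma t_0$ all lie in the range of $\phi$, whereas you construct an explicit inverse $\psi$ by realising $u$, $s_2$, $us_2$ as an isometric representation of $\N$ and a covariant representation of $X^*$ inside $\QQ_2$ and appealing to the universal property furnished by Theorem~\ref{thm: the boundary quotient for the bowtie}. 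Your route costs more verification --- you must check covariance of $\tilde t$ (your observation that (II) conjugated by $s_2$ forces $s_2^*us_2=0$ is the right key step), reduce the family of foundation-set relations (Q2) to the single Cuntz relation by the padding-to-length-$N$ argument (as in the paper's self-similar actions section), extend (K1) from letters to all of $X^*\times\N$, and note (K2) is automatic since $u$ is unitary --- but it buys a self-contained proof that does not rely on the simplicity of $\QQ_2$; conversely, the paper's argument is shorter precisely because it leans on that simplicity. Both arguments are sound, and you even flag the paper's route as an alternative in your parenthetical remark.
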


\begin{proof}
We claim that
\[
U:=s_\gamma\in\QQ(X^*\bowtie\N)\quad\text{and}\quad S_2:=t_0\in\QQ(X^*\bowtie \N)
\]
satisfy relations (I) and (II). First note that $U$ is unitary because of (Q1). Recall from the formulae in Section~\ref{subsec: the adding machine} that $\gamma\cdot 0=1$, $\gamma|_0=e$, $\gamma\cdot 1=0$ and $\gamma|_1=\gamma$. It then follows from (K1) that
\[
US_2=s_\gamma t_0=t_{\gamma\cdot 0}s_{\gamma|_0}=t_1s_e=t_1.
\]
Hence
\[
U^2S_2=U(US_2)=s_\gamma t_1=t_{\gamma\cdot 1}s_{\gamma|_1}=t_0s_\gamma=S_2U,
\]
and so (I) is satisfied. For (II) first note that, since $\{0,1\}\subset X^*$ is a foundation set, we have from (Q2) that $(1- t_0t_0^*)(1-t_1t_1^*)=0$. Since $t_0t_0^*$ and $t_1t_1^*$ are orthogonal, their sum is one. Hence
\[
S_2S_2^*+US_2S_2^*U^*=t_0t_0^*+t_1t_1^*=1.
\]
So (II) is satisfied. The universal property of $\QQ_2$ gives a homomorphism $\phi:\QQ_2\to\QQ(X^*\bowtie\N)$ such that $\phi(u)=s_\gamma$ and $\phi(s_2)=t_0$. We know $\phi$ is injective because $\QQ_2$ is simple. Since the generators of $\QQ(X^*\bowtie \N)$ are all in the range of $\phi$, we know $\phi$ is surjective, and hence an isomorphism.  
\end{proof}

\subsection{Products of self-similar actions}\label{subsec: products of SSAs C*}

Consider a \ZS product $\F_\theta^+ \bowtie G$ coming from a product 
of two 
self-similar actions as constructed in Section 
\ref{subsec:products_SSAs}, 
where $\F_\theta^+$ is right LCM. In general, the $C^*$-algebra 
$C^*(\F_\theta^+\bowtie G)$ is the universal $C^*$-algebra generated 
by a 
Toeplitz-Cuntz-Krieger family $\{s_\lambda : \lambda \in 
\F_\theta^+\}$ and a 
unitary representation $u$ of $G$ satisfying
\begin{equation} \label{eq:productSSA C*}
 u_g s_\lambda = s_{g\cdot\lambda} u_{g|_\lambda}.
\end{equation}
for all $g\in G$ and $\lambda \in \F_\theta^+$.
Likewise, the quotient $\QQ(\F_\theta^+\bowtie G)$ is the universal 
$C^*$-algebra generated by a Cuntz-Krieger family $\{s_\lambda : 
\lambda \in 
\F_\theta^+\}$ and a unitary representation $u$ of $G$ satisfying 
(\ref{eq:productSSA C*}).

Recall from Section~\ref{exm:productaddingmachines} the semigroup 
$\F_\theta^+\bowtie\Z$ 
constructed from the product of two adding machines 
$(\Z,\{0,\dots,m-1\})$ and 
$(\Z,\{0,\dots,n-1\})$. If $m$ and $n$ are coprime, $\F_\theta^+$ is 
right LCM, and we 
can apply our theorems to $\F_\theta^+\bowtie\Z$. The $C^*$-algebra 
$C^*(\F_\theta^+)$ has been studied extensively in \cite{DY2009}; 
Corollary~3.2 of \cite{DY2009} implies that the $2$-graph 
$\F_\theta^+$ is 
aperiodic, and hence $C^*(\F_\theta^+)$ is simple by \cite[Theorem 
3.1]{RS2007}. It would be interesting, albeit outside the scope of 
this paper, to further 
understand $C^*(\F_\theta^+\bowtie\Z)$ from the point of view of 
these existing 
constructions.

%Furthermore, there is an isomorphic copy of 
%$\QQ(\F_\theta^+\bowtie \Z)$ in the quotient $\TT_{\add}(\nxn)$ 
%generated by
%\[
% \{s^i v_m : 0\leq i \leq m-1\} \cup \{s^j v_n : 1\leq j \leq n-1\} 
%\cup \{s\}.
%\]
%It would be interesting, albeit outside the scope of this paper, to 
%further 
%understand these $C^*$-algebras from the point of view of these 
%existing 
%constructions.

\end{document}